\numberwithin{equation}{section}
\theoremstyle{plain}
\newtheorem{theorem}{Theorem}[section]
\newtheorem*{thm-konig}{K\H onig--Egerv\'ary Theorem}
\newtheorem{observation}[theorem]{Observation}
\newtheorem{lemma}[theorem]{Lemma}
\newtheorem{claim}[theorem]{Claim}
\newtheorem{conjecture}{Conjecture}
\theoremstyle{definition}
\newtheorem{remark}{Remark}
\theoremstyle{plain}
\DeclareMathOperator{\SC}{\omega_S}
\begin{document}

\title{The strong clique number of graphs with forbidden cycles}

\author{\small Eun-Kyung Cho\thanks{
Department of Mathematics, Hankuk University of Foreign Studies, Yongin-si, Gyeonggi-do, Republic of Korea. \texttt{ekcho2020@gmail.com}
},  \  \ 
\small Ilkyoo Choi\thanks{
Department of Mathematics, Hankuk University of Foreign Studies, Yongin-si, Gyeonggi-do, Republic of Korea.
\texttt{ilkyoo@hufs.ac.kr}
},  \  \ 
\small Ringi Kim\thanks{
Department of Mathematical Sciences, Korea Advanced Institute of Science and Technology, Daejeon, Republic of Korea.
\texttt{kimrg@kaist.ac.kr}},  \  \  
 \small Boram Park\thanks{
Department of Mathematics, Ajou University, Suwon-si, Gyeonggi-do, Republic of Korea.
\texttt{borampark@ajou.ac.kr}}
,  \  \ 
}
\date\today
\maketitle
\begin{abstract}
Given a graph $G$, the {\it strong clique number} of $G$, denoted $\SC(G)$, is the maximum size of a set $S$ of edges such that every pair of edges in $S$ has distance at most $2$ in the line graph of $G$.
 As a relaxation of the renowned Erd\H{o}s--Ne\v set\v ril conjecture regarding the strong chromatic index, Faudree et al. suggested investigating the strong clique number, and 
 conjectured a quadratic upper bound in terms of the maximum degree. 

Recently, Cames van Batenburg, Kang, and Pirot conjectured a linear upper bound in terms of the maximum degree for graphs without even cycles. 
Namely, if $G$ is a $C_{2k}$-free graph, then $\SC(G)\leq (2k-1)\Delta(G)-{2k-1\choose 2}$, and if $G$ is a $C_{2k}$-free bipartite graph, then $\SC(G)\leq k\Delta(G)-(k-1)$. 
We prove the second conjecture in a stronger form, by showing that forbidding all odd cycles is not necessary. 
To be precise, we show that a $\{C_5, C_{2k}\}$-free graph $G$ with $\Delta(G)\ge 1$ satisfies $\SC(G)\leq k\Delta(G)-(k-1)$, when either $k\geq 4$ or $k\in \{2,3\}$ and $G$ is also $C_3$-free. 
Regarding the first conjecture, we prove an upper bound that is off by the constant term.
Namely, for $k\geq 3$, we prove that a $C_{2k}$-free graph $G$ with $\Delta(G)\ge 1$ satisfies $\SC(G)\leq (2k-1)\Delta(G)+(2k-1)^2$. 
This improves some results of Cames van Batenburg, Kang, and Pirot.

\end{abstract}

\maketitle

\section{Introduction}

Given a graph $G$, let $V(G)$, $E(G)$, and $\Delta(G)$ denote its vertex set, edge set, and maximum degree, respectively. 
We use $C_n$ and $P_n$ for a cycle and a path, respectively, on $n$ vertices.  
A graph $G$ is {\it $F$-free} for some graph $F$ if $G$ does not contain a subgraph isomorphic to $F$. 
The {\it distance} between two edges $e_1$ and $e_2$ of $G$ is the distance between  the two vertices corresponding to $e_1$ and $e_2$ in the line graph of $G$. 
In these terms, a {\it matching} is a collection of edges with pairwise distance at least 2, and an {\it induced matching} is a collection of edges with pairwise distance at least 3. 

Given a graph $G$, the {\it strong chromatic index} of $G$, denoted $\chi'_S(G)$, is the minimum $k$ such that $E(G)$ can be partitioned into $k$ induced matchings.
Since every edge has at most $2\Delta(G)^2-2\Delta(G)$ edges within distance 2, a greedy algorithm guarantees the following trivial bound: $\chi'_S(G)\leq 2\Delta(G)^2-2\Delta(G)+1$.
In 1985, Erd\H os and Ne\v set\v ril (see~\cite{bang2006six,faudree1990strong,halasz2012irregularities}) made the following renowned conjecture:

\begin{conjecture}[See \cite{bang2006six,faudree1990strong,halasz2012irregularities}]\label{conj:ErNe}
For a graph $G$,
\[\chi'_S(G)\leq
\begin{cases}
1.25 \Delta(G)^2 &\mbox{if $\Delta(G)$ is even }\\
1.25 \Delta(G)^2 -0.5 \Delta(G) +0.25 & \mbox{if $\Delta(G)$ is odd.}
\end{cases}
\]
\end{conjecture}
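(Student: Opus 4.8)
The plan is to work in the line graph, using the identity $\chi'_S(G)=\chi\bigl(L(G)^2\bigr)$, where $L(G)^2$ is the graph on $E(G)$ in which two edges are adjacent exactly when they lie within distance $2$. The trivial bound quoted above says $\Delta\bigl(L(G)^2\bigr)\le 2\Delta(G)^2-2\Delta(G)$, so the conjecture demands shaving roughly a factor of $1.6$ off the greedy estimate, since the target $1.25\Delta(G)^2$ is about $0.625$ times the leading term $2\Delta(G)^2$. The benchmark to keep in view is the balanced blow-up of $C_5$: replacing each vertex of a $5$-cycle by an independent set of size $\Delta(G)/2$ produces a graph whose $\tfrac{5}{4}\Delta(G)^2$ edges pairwise lie within distance $2$, so $L(G)^2$ is a complete graph and $\chi'_S(G)=5(\Delta(G)/2)^2=1.25\Delta(G)^2$ exactly. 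Any proof must therefore certify that this is the worst case.

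The engine I would use is a local trade-off between the degree and the sparsity of $L(G)^2$. For a fixed edge $e=uv$, let $d_2(e)$ count the edges within distance $2$ of $e$; these are exactly the edges meeting $N[u]\cup N[v]$. I would first make precise the dichotomy that governs such neighborhoods: when the ball of radius $2$ around $e$ is tree-like, $d_2(e)$ approaches $2\Delta(G)^2$ but the edges in this ball pairwise lie at distance $\ge 3$, so the neighborhood of $e$ inside $L(G)^2$ is almost independent; conversely, when edges cluster so that this neighborhood is dense, the count $d_2(e)$ is forced downward, as witnessed by the $C_5$-blow-up and by $K_{\Delta(G),\Delta(G)}$, where every neighborhood of $L(G)^2$ is complete yet $d_2(e)\le 1.25\Delta(G)^2$. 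The conjecture is, in essence, the assertion that these two regimes always net out at $1.25\Delta(G)^2$.

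With this in hand I would feed the local sparsity into a coloring theorem of the Molloy--Reed type: a graph of maximum degree $D$ in which the neighborhood of every vertex spans at least $\sigma D^2$ non-adjacent pairs admits a proper coloring with at most $(1-c\sigma)D$ colors for an absolute constant $c>0$. The task reduces to lower-bounding $\sigma=\sigma(e)$ uniformly, by a case analysis that distinguishes edges incident to $u$, edges incident to $v$, and edges in the second neighborhood, and that charges each tree-like configuration a guaranteed supply of non-adjacent pairs. Carried through, this yields a bound of the shape $(2-\varepsilon)\Delta(G)^2$ and recovers the known linear-leading-constant estimates.

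The hard part -- and the reason the conjecture is still open -- is that the sparsity saving saturates far above $1.25$. The extremal $C_5$-blow-up is \emph{locally as dense as possible}: its line-graph square is complete, so $\sigma\to 0$ there, and any argument that only exploits average local sparsity bottoms out around the current records of roughly $1.77\Delta(G)^2$. To reach $1.25$ one must upgrade the soft dichotomy of the second paragraph into a genuine \emph{structural} statement: an edge whose $L(G)^2$-neighborhood is nearly complete must sit in a configuration that locally resembles a $C_5$-blow-up or a complete bipartite graph, each of which can be colored by hand within the target. Gluing such hand-colored dense regions to the sparse remainder into a single global coloring, without the interfaces forcing extra colors, is the decisive obstacle, and it is precisely this step that no existing technique resolves.
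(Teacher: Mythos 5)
This statement is Conjecture~\ref{conj:ErNe}, the Erd\H{o}s--Ne\v{s}et\v{r}il conjecture from 1985. The paper does not prove it --- it explicitly describes it as ``still far from reach'' and only surveys partial progress (Molloy--Reed's $1.9993\Delta(G)^2$, Bruhn--Joos's $1.9653\Delta(G)^2$, Bonamy--Perrett--Postle's $1.835\Delta(G)^2$, and the small-degree cases $\Delta(G)=3$). So there is no proof in the paper to compare yours against, and your submission must stand on its own as a proof. It does not: by your own admission in the final paragraph, the decisive step is unresolved, so what you have written is a (largely accurate) survey of the standard attack, not a proof.

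To name the gap concretely: your engine is the local-sparsity paradigm --- lower-bound the number $\sigma D^2$ of non-adjacent pairs in every $L(G)^2$-neighborhood and apply a Molloy--Reed-type theorem to color with $(1-c\sigma)D$ colors. This is exactly the method behind the known $(2-\varepsilon)\Delta(G)^2$ bounds, and it provably cannot reach $1.25\Delta(G)^2$: on the extremal $C_5$-blow-up, $L(G)^2$ is complete, so $\sigma=0$ and the method certifies nothing beyond the clique bound; uniformly over all graphs the saving saturates well above the target, which is why the record stands near $1.8\Delta(G)^2$. Your proposed remedy --- a stability statement asserting that any edge with near-complete $L(G)^2$-neighborhood sits in a local $C_5$-blow-up or complete bipartite structure, followed by hand-coloring the dense regions and gluing --- is stated as a desideratum with no proof of either the structural dichotomy or the interface-compatible gluing, and these are precisely the open obstructions. (Two smaller points: your degree bound $\Delta(L(G)^2)\le 2\Delta(G)^2-2\Delta(G)$ and the computation $\chi'_S=5(\Delta/2)^2$ on the blow-up are correct, but the claim that $d_2(e)\le 1.25\Delta(G)^2$ in $K_{\Delta,\Delta}$ is off --- there every pair of edges is within distance $2$, so $d_2(e)=\Delta(G)^2-1$, which is consistent with but not evidence for the dichotomy as you phrase it.) In short: the approach is the known one, the quantitative part falls short by design, and the structural part is missing; the conjecture remains open.
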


If Conjecture~\ref{conj:ErNe} is true, then it is sharp, as illustrated by blowing up each vertex of $C_5$ into an independent set of appropriate size. 
Despite the steady interest of numerous researchers, the conjecture still seems to be far from reach. 
We highlight two approaches with notable progress regarding Conjecture~\ref{conj:ErNe}.

One line of research focuses on reducing the coefficient of the leading term for graphs with sufficiently large maximum degree. 
Molloy and Reed~\cite{molloy1997bound} proved an upper bound of $1.9993\Delta(G)^2$, which was reduced to $1.9653\Delta(G)^2$ by Bruhn and Joos~\cite{bruhn2018stronger}.
Recently, a significant improvement was made by Bonamy, Perrett, and Postle~\cite{bonamy2018colouring}, who showed $1.835\Delta(G)^2$.

Another line of research tackles the conjecture for small maximum degrees. 
The only non-trivial case that is confirmed is  $\Delta(G)=3$, which was resolved by Andersen~\cite{andersen1992strong} and independently by Hor\'ak, Qing, and Trotter~\cite{horak1993induced}. 
The conjecture is open even for $\Delta(G)=4$.
Improving a result by Cranston~\cite{cranston2006strong}, Huang, Santana, and Yu~\cite{huang2018strong} recently proved that 21 colors suffice, whereas the conjectured bound is 20. 

As a variation of Conjecture~\ref{conj:ErNe}, researchers also considered classes of graphs with forbidden subgraphs. 
Unfortunately, the situation is not much better even for bipartite graphs. 
The following conjecture by Faudree et al.~\cite{faudree1990strong} is still open:

\begin{conjecture}[\cite{faudree1990strong}]\label{conj:bipartite}
For a  bipartite graph $G$, $\chi'_S(G)\leq \Delta(G)^2$. 
\end{conjecture}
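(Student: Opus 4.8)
The plan is to attack Conjecture~\ref{conj:bipartite} through the square of the line graph. Writing $L(G)^2$ for the graph on $E(G)$ in which two edges are adjacent exactly when they lie at distance at most $2$ in the line graph, we have $\chi'_S(G)=\chi\bigl(L(G)^2\bigr)$ and $\SC(G)=\omega\bigl(L(G)^2\bigr)$, so in particular $\SC(G)\le\chi'_S(G)$. Hence the cycle-forbidding bounds on $\SC(G)$ obtained in this paper control only the clique number of $L(G)^2$, whereas Conjecture~\ref{conj:bipartite} demands its chromatic number for \emph{every} bipartite $G$. The first step is to record the conflict structure: writing $X$ and $Y$ for the two sides, two edges $xy$ and $x'y'$ with $x,x'\in X$ and $y,y'\in Y$ are adjacent in $L(G)^2$ if and only if $x=x'$, or $y=y'$, or $x$ is adjacent to $y'$, or $x'$ is adjacent to $y$.

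With this in hand I would start from a two-dimensional palette. For each $x\in X$ fix an injection of the edges at $x$ into $[\Delta]$, yielding a colouring $c_X\colon E(G)\to[\Delta]$ that is proper ``from the $X$-side,'' and symmetrically fix $c_Y\colon E(G)\to[\Delta]$ proper ``from the $Y$-side''; both exist because $\Delta(G)=\Delta$. Assign to each edge $e$ the pair $\varphi(e)=\bigl(c_X(e),c_Y(e)\bigr)\in[\Delta]\times[\Delta]$, a palette of exactly $\Delta^2$ colours. By construction $\varphi$ already separates every distance-$1$ conflict: edges sharing their $X$-endpoint differ in $c_X$, and edges sharing their $Y$-endpoint differ in $c_Y$. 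Thus the product colouring disposes of the two ``clique'' families of $L(G)^2$ for free, which is precisely why $\Delta^2$, rather than the trivial $2\Delta^2-2\Delta+1$, is the natural target.

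The crux is the surviving cross-adjacency conflicts, pairs $xy$ and $x'y'$ with $x\sim y'$ (or $x'\sim y$). Here the edge $xy'$ witnesses the distance-$2$ relation, and although $c_X(xy)\ne c_X(xy')$ and $c_Y(xy')\ne c_Y(x'y')$, nothing forces $\varphi(xy)\ne\varphi(x'y')$: the product structure is blind to adjacencies crossing the bipartition. My plan is to retain $\varphi$ as a backbone and repair these violations, either probabilistically or structurally. Probabilistically, I would randomise the two side-colourings and invoke the Lov\'asz Local Lemma, weighing the number of cross-conflicts at an edge (at most about $2\Delta^2$) against the $\Delta^2$ available pairs. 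Structurally, I would induct on $\Delta$ by peeling off low-degree vertices from one side, exploiting that when one side has bounded degree the cross-conflicts are few enough to be absorbed into spare pairs.

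The step I expect to be the real obstacle is exactly this repair under the tight budget. The target $\Delta^2$ leaves essentially no slack: a single edge may conflict with on the order of $2\Delta^2$ others, so a Local Lemma estimate using the full palette falls well short of the usual $e\,p\,(d+1)\le 1$ inequality, while the naive greedy bound merely reproduces the trivial estimate. Squeezing the count down to $\Delta^2$ appears to require genuine cancellation between the two side-colourings --- choosing $c_X$ and $c_Y$ in a correlated manner so that cross-adjacent edges are systematically driven onto distinct pairs --- rather than treating the coordinates independently. Pinning down such a correlated choice, equivalently a $\Delta^2$-colouring of $L(G)^2$ compatible with the bipartite product structure, is the heart of the difficulty and the reason the conjecture has stayed open.
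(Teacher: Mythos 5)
There is a genuine gap here, and it is one you yourself concede: the statement you are addressing is Conjecture~\ref{conj:bipartite}, which is an \emph{open conjecture} --- the paper does not prove it, and explicitly records that it is known only for $\Delta(G)=3$ (Steger--Yu) and for graphs all of whose cycle lengths are divisible by $4$. What the paper does establish in this direction is the strictly weaker clique-number bound $\SC(G)\le \Delta(G)^2$ for bipartite $G$ (Theorem~\ref{thm:helppp}, quoted from Faudree et al.), a distinction your first paragraph draws correctly: the results of this paper bound $\omega(L(G)^2)$, not $\chi(L(G)^2)$. So there is no ``paper's own proof'' to match; any submission must stand as a complete new argument, and yours does not.

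Concretely, the unfilled step is the only step that matters. The product colouring $\varphi=(c_X,c_Y)$ handles edges sharing an endpoint, but those account for at most $2(\Delta-1)$ of the up to $2\Delta^2-2\Delta$ conflicts at an edge; the cross-adjacency conflicts you defer to a ``repair'' phase are the entire quadratic bulk of the problem. Neither repair mechanism you sketch closes it: as you note, the Lov\'asz Local Lemma fails under the $\Delta^2$ budget (bad events have probability roughly $\Delta^{-2}$ while the dependency degree is on the order of $\Delta^3$, since each event depends on the random injections at four vertices, so $ep(d+1)\le 1$ is violated by a factor growing with $\Delta$), and the inductive peeling is stated only as an intention with no invariant that survives reattaching a vertex. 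Your closing admission --- that one needs a correlated choice of $c_X$ and $c_Y$ and that pinning it down ``is the heart of the difficulty and the reason the conjecture has stayed open'' --- is accurate, but it means the proposal is a research plan, not a proof: the conjecture remains exactly as open after your argument as before it.
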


If the above conjecture is true, then it is tight as demonstrated by the complete bipartite graphs with appropriate part sizes. 
As supporting evidence, the authors of \cite{faudree1990strong} proved that Conjecture \ref{conj:bipartite} is true for graphs where all cycle lengths are divisible by 4.
Note that all cycle lengths in bipartite graphs are divisible by 2. 
Steger and Yu~\cite{STEGER1993291} verified Conjecture~\ref{conj:bipartite} for $\Delta(G)=3$, which is the only known non-trivial maximum degree case.

Mahdian~\cite{mahdian2000strong} strengthened Conjecture~\ref{conj:bipartite} by asserting the same conclusion holds when only a $5$-cycle is forbidden, opposed to forbidding all odd cycles.

\begin{conjecture}[\cite{mahdian2000strong}]\label{conj:c5}
For a $C_5$-free graph $G$,  $\chi'_S(G)\leq \Delta(G)^2$. 
\end{conjecture}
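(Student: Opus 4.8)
The plan is to prove the colouring statement $\chi'_S(G)\le \Delta(G)^2$ itself, i.e. to properly colour the vertices of $L(G)^2$ with the palette $[\Delta(G)^2]$, rather than to bound the clique number $\SC(G)$ of that graph. Write $\Delta=\Delta(G)$. The first thing I would record is that any proof must simultaneously survive two regimes that call for opposite techniques, and the $C_5$-free class contains both. The extremal instance is $K_{\Delta,\Delta}$, which is $C_5$-free and has $L(G)^2=K_{\Delta^2}$: every two edges lie at distance at most $2$, the bound is tight, and there is \emph{no} colouring slack at all. At the other end, when $G$ also has large girth (for example the incidence graph of a projective plane, which is $\{C_3,C_4,C_5\}$-free), a short count shows each edge $e=uv$ has $2\Delta^2-2\Delta$ edges within distance $2$, so the \emph{degree}, and hence the degeneracy, of $L(G)^2$ exceeds $\Delta^2-1$; yet $L(G)^2$ is globally sparse and $\chi'_S$ sits far below $\Delta^2$ there. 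Thus a plain degeneracy ordering of $L(G)^2$ cannot work (its degeneracy is too large on high-girth graphs), while a purely probabilistic local-sparsity argument cannot work either (it has no purchase on the clique-like case). The proof has to interpolate.

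Step one is a local structural analysis driven by the absence of $C_5$. For an edge $e=uv$ I would sort the edges within distance $2$ by their connecting $P_4$, either $v\,u\,x\,y$ or $u\,v\,y\,x$, and use $C_5$-freeness to bound how the second neighbourhoods of $u$ and of $v$ overlap and interact, exactly the mechanism already exploited to bound $\SC(G)$, now to be pushed from clique size to colour demand. The target is a per-edge dichotomy: the distance-$2$ neighbourhood of $e$ in $L(G)^2$ is either \emph{clustered}, meaning it is covered by a bounded number of near-cliques arising from high-degree vertices $x$ carrying many incident edges, or \emph{spread}, meaning the incident edges fan out tree-like with few forced colour coincidences. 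The role of $C_5$-freeness is to outlaw the mixed configuration in which a cluster at $u$ and a cluster at $v$ are linked by a long induced conflict, which is precisely what would inflate $\chi'_S$ past $\Delta^2$.

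Step two converts the dichotomy into a single $[\Delta^2]$-colouring. On the clustered part I would colour almost exactly, assigning each near-clique a reserved block of the palette and invoking the $C_5$-free interaction bound from Step one to verify that the blocks fit inside $[\Delta^2]$; this is the mechanism that makes the $K_{\Delta,\Delta}$ case tight-but-feasible. On the spread part I would run a semi-random (R\"odl nibble with the Local Lemma) colouring reusing the \emph{same} palette, where the tree-like local structure supplies the constant-factor surplus the probabilistic step requires. The two colourings are then merged along the interface edges, those sitting between a clustered and a spread region.

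The hard part will be exactly this merge, and it is where I expect the genuine difficulty, and the reason the conjecture has resisted proof, to live. In the extremal regime there is zero slack, so the nibble may not waste a single colour near a cluster, whereas the Local Lemma intrinsically demands a surplus; conversely the near-exact colouring of the clustered part pins down colours the nibble must avoid. Forcing one palette of size exactly $\Delta^2$ to serve both steps, with the $C_5$-free interaction bound certifying that no interface edge ever sees more than $\Delta^2-1$ forbidden colours, is the crux on which the whole argument turns; establishing that certificate with no room to spare is the step I would expect to consume most of the effort.
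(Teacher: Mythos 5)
The statement you were handed is Conjecture~\ref{conj:c5}, Mahdian's conjecture, and it is \emph{open}: the paper does not prove it, and there is no proof in the paper to compare your attempt against. What the paper records is only the strong-clique relaxation, namely that Cames van Batenburg, Kang, and Pirot~\cite{van2020strong} proved $\SC(G)\le \Delta(G)^2$ for $C_5$-free graphs $G$; since $\SC(G)$ is merely a lower bound for $\chi'_S(G)$, that result is consistent with the conjecture but far from it. A correct proof of the full colouring bound $\chi'_S(G)\le \Delta(G)^2$ would be a new research contribution, and your submission is not one: it is a research plan, not a proof.

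Concretely, both load-bearing steps of your plan are gaps. Step one's ``clustered/spread'' dichotomy is never formulated as a precise lemma --- no definition of a near-clique, no degree threshold, no statement of what $C_5$-freeness actually forbids --- and the claim that the clique-number analysis of~\cite{van2020strong} ``pushes'' from $\SC$ to colour demand is exactly what needs proof: as you note yourself, a vertex of $L(G)^2$ can have degree $2\Delta^2-2\Delta$, nearly twice the palette size, so local counting alone cannot certify list availability. Step two is where the attempt openly fails: the nibble/Local Lemma machinery intrinsically yields bounds with multiplicative slack, of the form $(1+\varepsilon)\Delta^2$ or $\Delta^2+o(\Delta^2)$, whereas the extremal case $K_{\Delta,\Delta}$ (where $L(G)^2=K_{\Delta^2}$) leaves zero slack --- and you explicitly defer the merge of the two colourings, i.e.\ the entire difficulty, to ``most of the effort.'' A proposal that correctly diagnoses why the problem is hard and then names its own unproved crux has not closed the gap; the conjecture remains exactly as open as the paper states.
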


Generalizing a result of Mahdian~\cite{mahdian2000strong}, who investigated $C_4$-free graphs, 
Vu~\cite{vu2002general} proved that the growth rate of the upper bound can actually be reduced by a logarithmic factor for $F$-free graphs, where $F$ is an arbitrary bipartite graph. 
Namely, for a bipartite graph $F$, there exists a constant $C_F$ such that if $G$ is an $F$-free graph with sufficiently large $\Delta(G)$, then $\chi'_S(G)\leq C_F\frac{\Delta(G)^2}{\log\Delta(G)}$.
Moreover, this result is tight up to a multiplicative constant factor.

\bigskip

A natural lower bound on a coloring parameter is the corresponding clique number. 
Given a graph $G$, a {\it strong clique} of $G$ is a set $S$ of edges such that every pair of edges in $S$ has distance at most $2$ in $G$.
The {\it strong clique number} of $G$, denoted $\SC(G)$, is the size of a maximum strong clique of $G$. 
As a weakening of Conjecture~\ref{conj:ErNe}, Faudree et al.~\cite{faudree1990strong} made the following conjecture:

\begin{conjecture}[\cite{faudree1990strong}]\label{conj:SC}
For a graph $G$, 
$$\SC(G)\leq
\begin{cases}
1.25 \Delta(G)^2 &\mbox{if $\Delta(G)$ is even }\\
1.25 \Delta(G)^2 -0.5 \Delta(G) +0.25 & \mbox{if $\Delta(G)$ is odd.}
\end{cases}$$
\end{conjecture}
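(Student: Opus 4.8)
The plan is to bound a maximum strong clique $S$ directly, rather than through the inequality $\SC(G)\le\chi'_S(G)$. The latter route cannot succeed: the best unconditional bound on $\chi'_S(G)$ is $1.835\Delta(G)^2$~\cite{bonamy2018colouring}, which already exceeds the target $\tfrac54\Delta(G)^2$, and reaching $\tfrac54\Delta(G)^2$ through $\chi'_S(G)$ would amount to proving Conjecture~\ref{conj:ErNe} itself. So a direct combinatorial analysis of $S$ is needed.

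I would begin with a dichotomy on $S$: either its edges pairwise share an endpoint, or $S$ contains two vertex-disjoint edges. In the first case $S$ is an intersecting family of $2$-element sets, and the elementary structure of such families forces either a common vertex (a star) or a triangle; hence $|S|\le\max\{\Delta(G),3\}$, far below the target once $\Delta(G)\ge 3$. The residual cases $\Delta(G)\le 2$ are easily handled directly, with $C_5$ realizing equality at $\Delta(G)=2$.

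The real content is the disjoint case. Fix vertex-disjoint edges $e=uv$ and $f=xy$ in $S$; since $S$ is a strong clique, $d(e,f)=2$, so some endpoint of $e$ is adjacent to some endpoint of $f$. The guiding observation is that an edge $g$ is within distance $2$ of $e$ precisely when $g$ has an endpoint in $A:=N[u]\cup N[v]$, and within distance $2$ of $f$ precisely when $g$ has an endpoint in $B:=N[x]\cup N[y]$. Thus every edge of $S$ meets both $A$ and $B$. Using only that each such edge is incident to $A$, together with $|A|\le 2\Delta(G)$, already yields the crude bound $|S|\le\sum_{w\in A}\deg_G(w)\le|A|\,\Delta(G)\le 2\Delta(G)^2$.

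The hard part, and the whole point of the conjecture, is to improve this easy $2\Delta(G)^2$ down to $\tfrac54\Delta(G)^2$. The slack has to be wrung out of the full pairwise distance-$2$ condition across \emph{all} of $S$, not merely its interaction with $e$ and $f$: heuristically, one cannot simultaneously have $|A|$ near $2\Delta(G)$ and have almost every $w\in A$ satisfy $N(w)\subseteq B$, and it is this tension that should force the constant from $2$ down to $\tfrac54$. I would try to make it quantitative by partitioning $A\cup B$ according to which of $N[u],N[v],N[x],N[y]$ contain each vertex, bounding the contribution of each block to $S$, and solving the resulting extremal optimization—ideally via a weighting or discharging scheme, or a linear-programming relaxation whose tight instance reproduces the balanced blow-up of $C_5$. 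Matching this extremal configuration exactly, and thereby pinning the constant at $\tfrac54$, is where I expect to get stuck; it is precisely the obstruction that has kept Conjecture~\ref{conj:SC} open in general.
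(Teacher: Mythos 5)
You have not proved the statement, and you could not have been expected to: Conjecture~\ref{conj:SC} is exactly that---a conjecture. The paper does not prove it and does not claim to; it only cites it from \cite{faudree1990strong} as motivation. The strongest known general bound is $\SC(G)\le \frac{4}{3}\Delta(G)^2$ due to Faron and Postle~\cite{faron2019clique}, which still exceeds the conjectured $\frac{5}{4}\Delta(G)^2$, and the paper itself only establishes the conjectured-type bounds under additional hypotheses (forbidden cycles, Theorems~\ref{thm:conj:biparite}, \ref{thm:main:coro}, \ref{thm:general}). So there is no ``paper's own proof'' to compare against, and your closing admission---that pinning the constant at $\frac{5}{4}$ is where you get stuck---is precisely the genuine gap: everything before it is preparatory, and the missing step is the open problem itself.

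On the substance of what you did write: the easy parts are sound. The intersecting-family dichotomy (star or triangle, so $|S|\le\max\{\Delta(G),3\}$), the observation that an edge is within distance $2$ of $e=uv$ iff it meets $A=N[u]\cup N[v]$, and the resulting crude bound $|S|\le 2\Delta(G)^2$ are all correct; this is essentially the folklore argument that gives the trivial quadratic bound. But the proposal's ``weighting or discharging scheme, or linear-programming relaxation'' is a hope, not an argument: nothing in your text identifies the inequality that would force the constant below $\frac{4}{3}$, let alone to $\frac{5}{4}$, and the known partial results (the $(2-\varepsilon)\Delta(G)^2$ bound of \cite{faudree1990strong} and its successors \cite{bruhn2018stronger,sleszynska2016clique,faron2019clique}) each required substantially more delicate structural analysis than the four-set partition of $A\cup B$ you sketch. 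If you want a provable statement in the spirit of your plan, you should target one of the conditional results the paper actually establishes---e.g.\ the bipartite case $\SC(G)\le\Delta(G)^2$ or the forbidden-even-cycle bounds---rather than the full conjecture.
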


If Conjecture~\ref{conj:SC} is true, then it is sharp by the same graph demonstrating the tightness of Conjecture~\ref{conj:ErNe}. 
In contrast to the discouraging status quo for solving Conjecture~\ref{conj:ErNe}, there has been significant progress on Conjecture~\ref{conj:SC}.
The authors of~\cite{faudree1990strong} proved the existence of $\varepsilon>0$ such that $\SC(G)\leq (2-\varepsilon)\Delta(G)^2$ for  sufficiently large $\Delta(G)$. 
After successive improvements by Bruhn and Joos~\cite{bruhn2018stronger} and {\'S}leszy{\'n}ska-Nowak~\cite{sleszynska2016clique},  Faron and Postle~\cite{faron2019clique} recently proved that $\SC(G)\leq \frac{4}{3}\Delta(G)^2$.

We point out that Chung et al.~\cite{CHUNG1990129} proved that a graph where every pair of edges has distance at most 2 has at most $1.25\Delta(G)^2$ edges;  this is different from the strong clique number since a strong clique does not necessarily contain all edges of the host graph. 

We now redirect our attention to the strong clique number of graphs with forbidden subgraphs. 
As supporting evidence for Conjecture~\ref{conj:bipartite}, Faudree et al.~\cite{faudree1990strong} proved that a bipartite graph $G$ has strong clique number at most $\Delta(G)^2$. 
Note that this is tight as equality holds for the same graph demonstrating the tightness of Conjecture~\ref{conj:bipartite}. 
It was recently revealed that forbidding all odd cycles is not necessary, as Cames van Batenburg, Kang, and Pirot~\cite{van2020strong} showed that a $C_5$-free graph $G$ has strong clique number at most $\Delta(G)^2$. 
This enhancement verifies the strong clique version of  Conjecture~\ref{conj:c5}, as well as Conjecture~\ref{conj:SC} with a much better upper bound. 
Note that equality holds for complete bipartite graphs with appropriate part sizes.
 
Cames van Batenburg, Kang, and Pirot~\cite{van2020strong} also considered the class of graphs with other forbidden odd cycles. 
They proved that a $C_3$-free graph $G$ has strong clique number at most $1.25\Delta(G)^2$, which is tight for blowups of $C_5$. 
Note that this proves Conjecture~\ref{conj:SC} for $C_3$-free graphs.
They also proved that for $k\geq 3$, if $G$ is a $C_{2k+1}$-free graph where $\Delta(G)\geq 3k^2+10k$, then $\SC(G)\leq \Delta(G)^2$.
This implies the strong clique version of Conjecture~\ref{conj:bipartite}, as well as Conjecture~\ref{conj:SC} with a much better upper bound for graphs with a forbidden odd cycle and large maximum degree.

The authors of \cite{van2020strong} speculated that the situation is much different for the class of graphs with a forbidden even cycle. 
In contrast to the quadratic upper bounds of all aforementioned conjectures, they put forth the below conjecture asserting a linear upper bound:

\begin{conjecture}[\cite{van2020strong}]\label{conj:general}
For a $C_{2k}$-free graph $G$ with $\Delta(G)\ge 1$, $\SC(G) \leq (2k-1)\Delta(G)-{2k-1\choose 2}$. 
\end{conjecture}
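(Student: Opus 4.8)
The plan is to bound a maximum strong clique $S$ by locating a small vertex cover of its edge set and then counting; here I assume $\Delta(G)$ is large compared to $k$, and the few remaining cases with $\Delta(G)$ small relative to $k$ (where the claimed bound can even become vacuous, e.g.\ a single edge, or a $5$-cycle) are checked separately. Write $H$ for the subgraph $(V(S),S)$, and let $\nu$ and $\tau$ denote its matching number and vertex cover number. First I would dispose of the degenerate situation where the edges of $S$ pairwise intersect: a pairwise-intersecting family of $2$-element sets either has a common element, giving a star with $|S|\le\Delta(G)$, or is a triangle with $|S|=3$, both well below $(2k-1)\Delta(G)-\binom{2k-1}{2}$ once $\Delta(G)$ is large. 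Hence I may assume $S$ contains two disjoint edges and work with $\nu\ge 2$.

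The heart of the argument is the target claim that $S$ admits a vertex cover $T$ with $|T|\le 2k-1$. To prove it I would argue by contradiction from $\tau\ge 2k$, which forces a matching $f_1,\dots,f_k$ of $k$ pairwise disjoint edges of $S$ since $\nu\ge\tau/2\ge k$. As $S$ is a strong clique, each pair $f_i,f_j$ is joined by a ``linking'' edge of $G$ between an endpoint of $f_i$ and an endpoint of $f_j$. Contracting each $f_i=a_ib_i$ to a node yields a complete multigraph on $k$ nodes whose links I want to thread into an alternating closed walk $a_{i_1}b_{i_1}a_{i_2}b_{i_2}\cdots a_{i_k}b_{i_k}a_{i_1}$ of length exactly $2k$ in $G$, i.e.\ a $C_{2k}$, contradicting $C_{2k}$-freeness.

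The main obstacle lies precisely here. The linking edges need not attach to the endpoints in a way that lets a closed walk enter one endpoint of $f_i$ and leave by the other; a ``pendant'' endpoint (degree $1$ in $H$) can be entered but never exited, and indeed the extremal configuration—all edges incident to a $K_{2k-1}$ together with pendant edges—shows that many pairwise-linked disjoint edges can coexist with no $C_{2k}$ at all. So the real content is to route the links so that the assembled walk is a genuine cycle of the exact length $2k$ (no repeated vertex, no accidental shortcut to a shorter even cycle), which I would attempt by charging each pendant endpoint to its unique non-leaf neighbor and running the cycle-extraction only on the resulting ``core'' matching.

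Once a vertex cover $T$ with $|T|\le 2k-1$ is secured, the identity $|S|=\sum_{t\in T}d_S(t)-e_S(T)\le(2k-1)\Delta(G)-e_S(T)$ finishes the linear bound, where $d_S(t)$ counts edges of $S$ at $t$ and $e_S(T)$ counts edges of $S$ with both ends in $T$. Extracting the exact constant $-\binom{2k-1}{2}$, however, requires the sharper structural fact that whenever $|T|$ actually reaches $2k-1$ the cover is forced to be a clique of $G$ with all its internal edges lying in $S$, so that $e_S(T)=\binom{2k-1}{2}$. Proving this simultaneously with the exact cycle length is, I expect, the delicate point: the pendant analysis controls $|T|$ but not obviously the clique structure of $T$, and it is plausibly why one is pushed to settle for an additive error such as $(2k-1)^2$ in place of the tight $-\binom{2k-1}{2}$.
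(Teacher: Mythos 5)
You should first be clear about the status of the statement you attempted: it is a \emph{conjecture} (Conjecture~\ref{conj:general}, due to Cames van Batenburg, Kang, and Pirot), and this paper does not prove it. The paper explicitly notes it is \emph{false} for $k=2$: a $5$-cycle is $C_4$-free and all five of its edges form a strong clique, while the claimed bound gives $3\Delta(G)-3=3$. What the paper establishes is the weaker Theorem~\ref{thm:general}, $\SC(G)\le (2k-1)\Delta(G)+(2k-1)^2$ for $k\ge 3$, and obtaining the exact constant $-\binom{2k-1}{2}$ is left open. So your proposal, which does not exclude $k=2$ (your ``checked separately'' remark treats the $5$-cycle as a verifiable small case, when in fact it is a counterexample), cannot be completed as stated; and for $k\ge 3$ a complete proof would go beyond the paper, so it deserves especially strict scrutiny.

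Under that scrutiny, both gaps you flag are genuine and are precisely where the difficulty lives. (a) Your cycle-extraction claim --- that a matching $f_1,\dots,f_k$ of $k$ pairwise-linked disjoint edges inside a strong clique can be threaded into a $C_{2k}$ --- is false, and your own example refutes it: in $K_{2k-1}$ with a pendant edge at each vertex, the $2k-1$ pendant edges form a strong-clique matching of size $2k-1\ge k$, yet the graph contains no cycle longer than $2k-1$. The paper's Lemma~\ref{lem:matching} shows a strong-clique matching of size $2k$ (twice what you extract from $\tau\ge 2k$) forces a $C_{2k}$, and even that requires the whole machinery of $(x,M)$-paths, special matchings, Observation~\ref{obs:special} and Lemma~\ref{lem:path1}; nothing comparable follows from contracting the $f_i$ and noting the link multigraph is complete, and your ``charge pendants to their non-leaf neighbors'' idea is a hope, not an argument. (b) Since $G$ is not bipartite, the K\H onig--Egerv\'ary theorem is unavailable, so even a matching bound $\nu\le 2k-1$ yields only $\tau\le 4k-2$; your needed claim $\tau\le 2k-1$, and the further claim that a cover of size exactly $2k-1$ must induce a complete graph all of whose $\binom{2k-1}{2}$ edges lie in $S$, are exactly the forced extremal structure whose existence \emph{is} the open content of the conjecture. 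The paper sidesteps (b) entirely: rather than passing to a vertex cover, it bounds the matching number of $H$ by $2k-1$ via Lemma~\ref{lem:matching} and then counts degrees directly over $V(M)$, splitting matched vertices by how many $H$-neighbors they have outside $V(M)$; that is what costs the additive $+(2k-1)^2$ in place of $-\binom{2k-1}{2}$. In short: your outline correctly identifies the two obstructions, but it resolves neither, and the first step as written (a $C_{2k}$ from a size-$k$ matching) is simply wrong.
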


If Conjecture~\ref{conj:general} is true, then it is sharp as exhibited by the following graph $H$: attach $\Delta(H)-(2k-2)$ pendent edges to each vertex of a complete graph on $2k-1$ vertices. 
When $k=2$, however, Conjecture~\ref{conj:general} is false, since a $5$-cycle is $C_4$-free and the entire graph is a strong clique with five edges. 
For other values of $k$, Conjecture~\ref{conj:general} seems plausible. 
As evidence, 
the authors of~\cite{van2020strong} proved the following:

\begin{theorem}[\cite{van2020strong}]
Let $G$ be a graph with $\Delta(G)\ge 1$. 
\label{thm:van1}
\begin{itemize}
\item[\rm(i)] If $G$ is $C_4$-free and $\Delta(G)\geq 4$, then $\SC(G)\leq 3\Delta(G)-3$.
\item[\rm(ii)] For $k\geq 3$, 
if $G$ is $C_{2k}$-free, then $\SC(G)\leq 10k^2\Delta(G)-10k^2$.
\item[\rm(iii)] For $k\ge 2$, if $G$ is $\{C_{2k}, C_{2k+1}, C_{2k+2}\}$-free, then $\SC(G)\leq (2k-1)\Delta(G)-(2k-3)$.
\end{itemize}
\end{theorem}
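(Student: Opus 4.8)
I would first unwind the distance-$2$ condition: two edges $e=xy$ and $e'=x'y'$ of a strong clique $S$ either share an endpoint or are joined by a \emph{connector}, an edge with one endpoint in $\{x,y\}$ and one in $\{x',y'\}$. Fixing any edge $e_0=u_0v_0\in S$, every edge of $S$ then has both endpoints within distance $2$ of $\{u_0,v_0\}$, so $S$ lives in a bounded neighborhood; the whole difficulty is that this neighborhood can contain $\Theta(\Delta(G)^2)$ vertices, and the forbidden cycle must cut the relevant count down to linear.

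\textbf{Reduction to a small vertex cover.} All three target bounds have the shape $(2k-1)\Delta(G)-(\text{correction})$, and the extremal graph (a $K_{2k-1}$ with pendant edges attached) has a minimum vertex cover of exactly $2k-1$ vertices that induces a clique in $S$. This points to the real goal: produce a set $C$ of vertices meeting every edge of $S$ with $|C|\le 2k-1$, after which
\begin{equation*}
|S|\ \le\ \sum_{w\in C}\deg_G(w)-e_S(C)\ \le\ (2k-1)\Delta(G)-e_S(C),
\end{equation*}
where $e_S(C)$ counts edges of $S$ with both endpoints in $C$. I would deliberately \emph{not} build $C$ from a maximum matching of $S$: since pendant edges can be matched, a maximum matching of the extremal graph already has $2k-1$ edges and $4k-2$ endpoints, which ruins the count. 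Instead I would take $C$ to be the set of \emph{hubs} (vertices meeting at least two edges of $S$); every non-pendant edge of $S$ has a hub endpoint, and the genuinely isolated edges of $S$ (both endpoints of $S$-degree $1$) form an easy separate case, since all of $S$ must then lie within distance $2$ of such an edge.

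\textbf{The crux: bounding the spread via the forbidden cycle.} The heart is to show $C$ cannot be large, and this is where $C_{2k}$-freeness enters. Pairwise-disjoint edges of $S$ are pairwise joined by connectors, and a cyclic route that alternately traverses $t$ such edges and $t$ connectors closes into a cycle of length $2t$; taking $t=k$ would produce a forbidden $C_{2k}$. So, morally, the non-pendant part of $S$ admits at most $k-1$ pairwise-disjoint edges, which caps its vertex cover. The obstacle is twofold: the connectors must assemble into a \emph{simple} cycle (their endpoints may collide, and pendant edges are dead ends that block routing—this is precisely why the extremal $K_{2k-1}$ survives despite carrying a large matching), and the realized length must land \emph{exactly} on a forbidden value. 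This exact-length requirement is, I expect, the main difficulty, and it explains the differing shapes of the three statements: in (iii) one forbids the three consecutive lengths $\{2k,2k+1,2k+2\}$ precisely to gain enough slack that some routing necessarily hits a forbidden length, yielding the near-sharp correction $2k-3$; in (ii) one abandons exact routing and instead counts crudely how many hubs and connectors can coexist without a $C_{2k}$, which is what inflates the coefficient to $10k^2$; and in (i) one exploits the special feature of $C_4$-freeness—any two vertices share at most one common neighbor—to control connectors directly and recover the sharp constant $3\Delta(G)-3$ at $k=2$.

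\textbf{Assembling the bounds and boundary cases.} Finally I would combine the hub bound $|C|\le 2k-1$ with a lower bound on $e_S(C)$: forcing $C$ to be nearly a clique in $S$ would give the conjectured correction $\binom{2k-1}{2}$, whereas the weaker estimate $e_S(C)\ge 2k-3$ already yields (iii). I would also dispatch the degenerate cases by hand—most notably the hypothesis $\Delta(G)\ge 4$ in (i), which is exactly what rules out the $5$-cycle (a $C_4$-free strong clique of size $5$) as a counterexample, since $3\Delta(G)-3\ge 9>5$ once $\Delta(G)\ge 4$.
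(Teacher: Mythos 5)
This statement is cited from Cames van Batenburg, Kang, and Pirot \cite{van2020strong}; the paper you are reading does not prove it, so the fairest in-paper comparison is Section~\ref{sec:general}, where a strengthening of parts (ii) and (iii) is actually proved (Theorem~\ref{thm:general}). Measured against either, your proposal has a genuine gap: it is a strategy outline whose central claims are exactly the hard content, and they are left unresolved. Concretely, you never establish (a) that the set $C$ of hubs has size at most $2k-1$, (b) that pairwise-disjoint edges of $S$ together with their connectors can be routed into a \emph{simple} cycle of length exactly $2k$, or (c) that $e_S(C)\ge 2k-3$. You even flag (b) yourself (``this exact-length requirement is, I expect, the main difficulty''), but naming the difficulty is not the same as overcoming it. The paper's Lemma~\ref{lem:matching} is precisely the resolution of (b), and it is not a routine step: it needs the machinery of special matchings, $(x,M)$-paths, Observation~\ref{obs:special}, and Lemma~\ref{lem:path1}, and even then it only yields a cycle $C_{2m}$ from a matching of \emph{even} size $2m\ge 6$ that is entirely a strong clique. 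Your ``morally, the non-pendant part of $S$ admits at most $k-1$ pairwise-disjoint edges'' is moreover stronger than what is true in general: for $k=2$ the $5$-cycle is a $C_4$-free graph whose strong clique consists of $k$ disjoint non-pendant edges plus one more edge, and the correct general statement (matching size at most $2k-1$, not $k-1$) is both weaker and much harder to prove.

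It is also worth noting that the route you explicitly reject is the one that works. You dismiss building the cover from a maximum matching of the strong clique because, on the extremal graph, $V(M)$ has $4k-2$ vertices and ``ruins the count.'' The proof of Theorem~\ref{thm:general} does exactly this: it takes a maximum matching $M$ of $H$, caps $|M|\le 2k-1$ via Lemma~\ref{lem:matching}, and then does a careful degree count over $V(M)$ (partitioning via the sets $Y$ and $Z$ and exploiting maximality of $M$) to absorb the pendant-edge problem. The price is the additive constant: one gets $+(2k-1)^2$ rather than the conjectured $-\binom{2k-1}{2}$, which is consistent with your observation that the matching route cannot give the sharp correction---but the sharp correction (Conjecture~\ref{conj:general}) is open, and your hub-based alternative, which would need to deliver it, is not carried out. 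As written, the proposal identifies the right landscape but proves none of the three bounds.
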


Note that (i) in the above theorem resolves Conjecture~\ref{conj:general} in the affirmative when $k=2$ and $\Delta(G)$ is not so small. 
The authors of~\cite{van2020strong} also put forth the following conjecture for bipartite graphs with a forbidden even cycle. 

\begin{conjecture}[\cite{van2020strong}]\label{conj:bipart}
For a $C_{2k}$-free bipartite graph $G$ with $\Delta(G)\ge 1$,  $\SC(G) \leq k\Delta(G)-(k-1)$. 
\end{conjecture}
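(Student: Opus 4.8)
The plan is to fix a maximum strong clique $S$ in a $C_{2k}$-free bipartite graph $G$ with parts $A$ and $B$, and to prove $|S|\le k\Delta(G)-(k-1)$ directly. First I would record the bipartite reformulation of closeness: writing each edge of $S$ as $ab$ with $a\in A$ and $b\in B$, two edges $ab,a'b'\in S$ lie at distance at most $2$ precisely when they share an endpoint or one of the cross pairs $ab'$, $a'b$ is an edge of $G$. The single source of the constant $k$ will be the observation that $K_{k,k}$ contains $C_{2k}$: hence in a $C_{2k}$-free bipartite graph no $k$ vertices of $A$ can have $k$ common neighbours in $B$, and more generally $k$ vertices of $A$ sharing a common neighbourhood of size at least $k$ is forbidden. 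I would use this, rather than any Tur\'an-type edge count, to extract the exact coefficient.

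The main step would be a local analysis around a vertex of maximum strong-clique degree. Let $v_0$ be a vertex, say in $A$, incident to the maximum number $D$ of edges of $S$, with $S$-neighbours $b_1,\dots,b_D$. Comparing an arbitrary edge $ab\in S$ with $a\ne v_0$ against all of $v_0b_1,\dots,v_0b_D$ and using the reformulation, I would establish the dichotomy: either $b\in N(v_0)$, or $a$ is \emph{heavy}, meaning $a$ is adjacent to all but at most one of $b_1,\dots,b_D$. The extremal fact then caps the heavy vertices: if there were $k$ heavy vertices (including $v_0$), their common neighbourhood inside $\{b_1,\dots,b_D\}$ would have size at least $D-(k-1)$, so once $D\ge 2k-1$ this produces $k$ vertices of $A$ with at least $k$ common neighbours, a forbidden $C_{2k}$. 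Hence at most $k-1$ heavy vertices exist, contributing at most $(k-1)\Delta(G)$ edges, while every remaining edge of $S$ has its $B$-endpoint in $N(v_0)$.

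To finish I would control the edges landing in $N(v_0)$ and pin the coefficient down to $k$ while recovering the additive $-(k-1)$. Here I expect to invoke the K\H{o}nig--Egerv\'ary Theorem on the bipartite subgraph formed by $S$: a minimum vertex cover $W$ of $(V(S),S)$ satisfies $|S|\le\sum_{w\in W}\deg_G(w)-e_S(W)$, where $e_S(W)$ counts the edges of $S$ inside $W$. The heavy/into-$N(v_0)$ structure, together with the symmetric analysis on the $B$-side, should force $W$ to be concentrated on at most $k$ vertices that moreover span at least $k-1$ edges of $G$ (the star at $v_0$ together with the links certifying heaviness), which is exactly what turns the crude $|W|\Delta(G)$ into $k\Delta(G)-(k-1)$. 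This mirrors how the additive term is obtained in Theorem~\ref{thm:van1}(iii).

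The hard part will be making the last paragraph rigorous, that is, obtaining the \emph{exact} coefficient $k$ and the precise constant rather than the weaker $2(k-1)\Delta(G)$ that a direct matching-and-cover bound yields. Two phenomena make this delicate. First, the extremal cap on heavy vertices only activates once $D\ge 2k-1$, so the regime of small maximum strong-clique degree must be treated separately and essentially by hand. Second, some tight configurations are genuinely cyclic rather than star-covered: already for $k=2$ the graph $C_6$ has a strong clique of size $3=2\Delta(G)-1$ consisting of three pairwise non-adjacent edges, whose minimum vertex cover has size $3$, so no covering-by-few-stars argument can see it, and any complete proof must reconcile such cyclic extremal examples with the star-like ones. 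I expect the resolution to require a careful extremal argument specific to strong cliques that exploits the full strength of the pairwise distance-$2$ condition (not merely the links between a single fixed maximum matching), balanced symmetrically between the two sides of the bipartition.
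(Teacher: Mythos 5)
Your second paragraph is essentially correct: in a bipartite graph, comparing an edge $ab\in S$ (with $a\neq v_0$) against $v_0b_i$ forces $b=b_i$, or $v_0b\in E(G)$, or $ab_i\in E(G)$, so indeed either $b\in N(v_0)$ or $a$ is adjacent to all of $b_1,\dots,b_D$; and the cap of at most $k-1$ heavy vertices once $D\ge 2k-1$ follows from $C_{2k}\subseteq K_{k,k}$ as you say. The genuine gap is exactly where you flag it: nothing in the proposal bounds the edges of $S$ whose $B$-endpoint lies in $N(v_0)$. Those edges could, for all you have shown, form a matching of size far larger than $k$, and covering them could require up to $|N(v_0)|=\Delta(G)$ vertices, so the K\H onig--Egerv\'ary step has nothing to act on: to get a vertex cover of size at most $k$ you must first bound the \emph{matching number} of the strong clique by $k$, and no step of your argument does this. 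Your declared ``single source of the constant $k$,'' the observation $C_{2k}\subseteq K_{k,k}$, is quantitatively too weak for that purpose: a strong-clique matching $a_1b_1,\dots,a_mb_m$ only guarantees, for each pair $i\neq j$, \emph{one} of the cross edges $a_ib_j$, $a_jb_i$ --- a tournament-like pattern --- and extracting a $K_{k,k}$ from such a pattern needs $m$ exponential in $k$, not $m=k+1$. Your hoped-for structural conclusion that the cover ``spans at least $k-1$ edges'' is also false in general: the paper must separately handle the case where the minimum cover lies entirely in one side of the bipartition (so it spans no edges at all), and there the $-(k-1)$ term comes from a different argument, namely building a forbidden $C_{2k}$ by extending a path through the full-degree vertices.

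The missing idea is the paper's Lemma~\ref{lem:matching:path}: encode a strong-clique matching of size $m$ as a semi-complete digraph (one vertex per matching edge, with an arc $(w_i,w_j)$ when $a_ib_j\in E(G)$); a Hamiltonian path in a tournament gives a $P_{2m}$ through all matching edges, and for $m\ge 4$ a directed cycle or the acyclic ordering of strong components gives a $C_{2m-2}$. Hence a matching of size $k+1$ in the clique forces a $C_{2k}$, so $\tau(H)\le k$ by the K\H onig--Egerv\'ary Theorem --- this, not $K_{k,k}$-freeness, is what produces the coefficient $k$. The remainder of the paper's proof (pushing $k\Delta(G)$ down to $k\Delta(G)-(k-1)$ via the two cases on the cover, plus a bare-hands argument for $k=2$) is precisely the work your third and fourth paragraphs gesture at but do not supply; in addition, the regime $D\le 2k-2$, where your heavy-vertex cap never activates, is left untreated. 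As written, the proposal establishes no upper bound on $\SC(G)$ at all.
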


If Conjecture~\ref{conj:bipart} is true, then it is sharp for the following graph: attach $p$ pendent edges to one vertex of degree $k-1$ in a complete bipartite graph $K_{k-1,p+k-1}$. As evidence for Conjecture~\ref{conj:bipart}, the following theorem was shown:

\begin{theorem}[\cite{van2020strong}]\label{thm:manycycles}
If $G$ is a $\{C_3, C_5, C_{2k}, C_{2k+2}\}$-free graph, then $\SC(G)\leq \max\{k\Delta(G), 2k(k-1)\}$. 
\end{theorem}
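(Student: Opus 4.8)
The plan is to bound a maximum strong clique $S$ by controlling the structure of its support graph $H$, the subgraph of $G$ with edge set $S$ and vertex set $\bigcup_{e\in S}e$. I would fix a maximum matching $M=\{a_1b_1,\dots,a_mb_m\}$ consisting of edges of $S$; by maximality every edge of $S$ meets $V(M)$. The argument then splits into two tasks: showing that $H$ has a vertex cover of size at most $k$, and converting this into the edge bound $|S|\le k\Delta(G)$, with the constant term $2k(k-1)$ absorbing the small-maximum-degree regime. Note that $k\Delta(G)=2k(k-1)$ exactly when $\Delta(G)=2(k-1)$, so the two terms in the maximum meet precisely at this threshold, which strongly suggests that the main argument delivers $k\Delta(G)$ once $\Delta(G)$ is large and the constant only handles degenerate base cases.

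First I would show that $H$ is bipartite, using that no odd cycle can have all of its edges in a strong clique when $G$ is $\{C_3,C_5\}$-free. Take a shortest such odd cycle; it has length at least $7$, since $C_3$ and $C_5$ are forbidden. On a $7$-cycle no chord can exist at all, because a distance-$2$ chord creates a $C_3$ and a distance-$3$ chord creates a $C_5$; hence the cycle is induced, and one may pick two edges $e,f$ whose four endpoints are pairwise non-adjacent along it. Any edge realizing distance $2$ between $e$ and $f$ would have to be such a chord, so $e$ and $f$ lie at distance at least $3$, contradicting $e,f\in S$. For longer odd cycles the same mechanism applies: the chords needed to keep all edge-pairs within distance $2$ would create a forbidden short odd cycle. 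With $H$ bipartite, the K\H onig--Egerv\'ary Theorem gives $\tau(H)=\nu(H)$, so it suffices to bound the matching number $\nu(H)=m$.

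The heart of the proof, and the step I expect to be the main obstacle, is to show $m\le k$. Suppose instead that $S$ contains $k+1$ pairwise disjoint edges $e_1,\dots,e_{k+1}$. Since $S$ is a strong clique, each pair $e_i,e_j$ is joined by at least one connecting edge, and because $G$ is triangle-free these connectors form a matching between the two edges: no endpoint of $e_i$ can be adjacent to both endpoints of $e_j$. I would then look for an alternating cycle that traverses each chosen matching edge fully and uses a single connector between consecutive edges; routing through all $k+1$ edges yields a $C_{2k+2}$, while routing through a suitable $k$ of them yields a $C_{2k}$, and both lengths are forbidden. The difficulty lies entirely in guaranteeing such a cycle with no repeated vertices and with exactly the right length: connectors may share endpoints or may join the ``same sides'' of two edges, and one must invoke the $C_5$-freeness to stop the routes from short-circuiting into shorter cycles. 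The freedom to target either $2k$ or $2k+2$, which is exactly why two consecutive even cycles are forbidden, is what lets one fix the parity no matter how the connectors are configured.

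Finally, with $m=\nu(H)=\tau(H)\le k$, I would take a minimum vertex cover $T$ of $H$, so $|T|\le k$ and every edge of $S$ is incident to $T$; hence $|S|\le\sum_{v\in T}\deg_G(v)\le k\Delta(G)$. This already gives the claimed bound once $\Delta(G)\ge 2(k-1)$, and for smaller $\Delta(G)$ it yields $|S|\le k\Delta(G)\le 2k(k-1)$. The remaining low-degree configurations, where the cycle-extraction step lacks the room it needs, are precisely those absorbed by the constant term, and combining the two regimes gives $\SC(G)=|S|\le\max\{k\Delta(G),2k(k-1)\}$.
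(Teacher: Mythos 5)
Your high-level plan --- show that the support graph $H$ of the strong clique is bipartite, bound its matching number by $k$ using the two forbidden even cycles, then convert via the K\H onig--Egerv\'ary Theorem to $|S|\le k\Delta(G)$ --- is sound, and it is essentially the architecture this paper uses to prove its stronger Theorems~\ref{thm:conj:biparite} and~\ref{thm:main:coro}. But both structural steps have genuine gaps. For bipartiteness, your argument is complete only for a shortest odd cycle of length $7$, where every chord closes a $C_3$ or a $C_5$, so the cycle is induced and two far-apart edges of it cannot be at distance $2$. For length $9$ and beyond the mechanism is genuinely different: a chord of a $C_9$ joining vertices at cycle-distance $3$ creates only a $C_4$ and a $C_7$, neither of which is forbidden, so the cycle need not be induced and ``the same mechanism'' is simply not available. (For $C_9$ one can still finish with extra work: the strong-clique condition forces all nine distance-$3$ chords $v_{i+1}v_{i+4}$, and $v_2v_5$, $v_5v_8$, $v_8v_2$ then form a triangle; but for length $11$ and beyond the admissible chord patterns multiply and you have not analyzed them.) It is telling that the corresponding statement in this paper, Lemma~\ref{lem:H-minimal:clique}, is proved under the extra hypotheses that the clique is \emph{maximum} and that $G$ is $E(H)$-\emph{minimal}, and its proof runs through the minimality properties of Lemma~\ref{lem:minimal} (every vertex covered, diameter at most $3$, existence of far edges) rather than through a chord analysis of a single odd cycle.

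The second gap is the one you flag yourself, and it is not a technicality: it is the theorem. You assert that $k+1$ disjoint clique edges can be routed into a cycle of length exactly $2k+2$ or exactly $2k$, but you give no construction. The closest tool in this paper is Lemma~\ref{lem:matching:path}, whose proof needs the \emph{ambient} graph, not just $H$, to be bipartite: bipartiteness is what guarantees that for every two matching edges $x_iy_i$, $x_jy_j$ one of $x_iy_j$, $x_jy_i$ is present, which makes the auxiliary digraph semi-complete so that a tournament Hamiltonian path exists. You have only argued (modulo the first gap) that $H$ is bipartite; the connectors live in $G$, where same-side edges $x_ix_j$ or $y_iy_j$ are possible, and then the routing problem becomes the hard general-graph problem that occupies all of Section~\ref{sec:general}. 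Even with that machinery, Lemma~\ref{lem:matching} extracts a $C_{2m}$ only from a strong-clique matching of size $2m$ --- a factor-two loss, which would require matching number $2k$ rather than $k+1$ --- so it cannot be cited to fill your gap either. (Your parity remark does correctly explain why the hypothesis lists both $C_{2k}$ and $C_{2k+2}$; note that Lemma~\ref{lem:matching:path} pins the cycle length exactly, which is how Theorem~\ref{thm:conj:biparite} dispenses with $C_{2k+2}$.) Finally, your handling of the constant $2k(k-1)$ is circular: you never identify the cases in which your main argument fails, nor prove $|S|\le 2k(k-1)$ in those cases; if the main argument worked unconditionally it would give $k\Delta(G)$ outright and the maximum would be vacuous.
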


Our first contribution is that we verify Conjecture~\ref{conj:bipart} in a much stronger form. 
Theorem~\ref{thm:conj:biparite} resolves Conjecture~\ref{conj:bipart} in the affirmative. 

\begin{theorem}\label{thm:conj:biparite}
For $k\ge 2$, if $G$ is a $C_{2k}$-free bipartite graph and $\Delta(G)\ge 1$, then $\SC(G) \leq k\Delta(G)-(k-1)$. 
\end{theorem}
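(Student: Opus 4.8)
Fix a bipartition $(A,B)$ of $G$ and let $S$ be a \emph{maximum} strong clique; in particular $S$ is closed, meaning it already contains every edge of $G$ that lies within distance $2$ of all edges of $S$. I first record the bipartite form of the distance condition: two disjoint edges $a_ib_i$ and $a_jb_j$, with $a_i,a_j\in A$ and $b_i,b_j\in B$, lie within distance $2$ precisely when $a_ib_j\in E(G)$ or $a_jb_i\in E(G)$; call such an edge a \emph{link}. The overall plan is to cover $S$ by few vertices and then track the constant. Let $\nu=\nu(S)$ be the maximum number of pairwise disjoint edges of $S$. Applying the K\H onig--Egerv\'ary Theorem to the bipartite graph $(V(S),S)$ produces a vertex cover $C$ with $|C|=\nu$, and since every edge of $S$ meets $C$ we obtain the identity $|S|=\sum_{c\in C}d_S(c)-e_S(C)$, where $d_S(c)$ is the number of edges of $S$ at $c$ and $e_S(C)$ is the number of edges of $S$ with both ends in $C$.

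The structural heart of the leading term, valid for $k\ge3$, is the inequality $\nu\le k$, which I would prove by contradiction. Given $k+1$ pairwise disjoint edges $a_1b_1,\dots,a_{k+1}b_{k+1}$ of $S$, every pair carries a link, so choosing one link per pair yields a tournament $T$ on $[k+1]$, where the arc $i\to j$ records $a_ib_j\in E(G)$. Every tournament has a Hamiltonian path, which after relabelling is $1\to2\to\cdots\to(k+1)$, that is $a_ib_{i+1}\in E(G)$ for $i\in[k]$. Now examine the pair $\{1,k+1\}$. If $a_1b_{k+1}\in E(G)$ then $a_1b_2a_2b_3\cdots a_kb_{k+1}a_1$ is a $C_{2k}$; otherwise $a_{k+1}b_1\in E(G)$, so together with the Hamiltonian path the digraph contains a spanning directed cycle and is therefore strongly connected. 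Moon's theorem then applies: a strongly connected tournament on $k+1\ge4$ vertices is vertex-pancyclic, hence has a directed cycle $c_1\to\cdots\to c_k\to c_1$ of length exactly $k$, which lifts to the $C_{2k}$ given by $a_{c_1}b_{c_2}a_{c_2}\cdots a_{c_k}b_{c_1}a_{c_1}$. In all cases $G$ contains $C_{2k}$, a contradiction, and so $|S|\le\nu\Delta\le k\Delta$ already gives the leading term.

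The delicate part, which I expect to be the main obstacle, is to improve this to $|S|\le k\Delta-(k-1)$, that is, to find $k-1$ units of slack in $\sum_{c\in C}\bigl(\Delta-d_S(c)\bigr)+e_S(C)$ in the tight regime $\nu=k$. The naive estimate $|S|\le\nu\Delta$ is attained by $K_{2,\Delta}$, whose cover lies entirely on one side so that $e_S(C)=0$; thus the enemy is a cover concentrated on one side with saturated (full-degree) hubs and no internal edges. The tool that defeats it is a nesting lemma: for any two vertices $a,a'\in A$ incident to $S$, either $N_S(a)\subseteq N_G(a')$ or $N_S(a')\subseteq N_G(a)$, for otherwise a private $S$-neighbour of each would give two disjoint edges with no link. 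Writing $C=C_A\cup C_B$, nesting forces a near-linear order on the $S$-neighbourhoods of the hubs on a single side, and if these neighbourhoods are large this order yields a $K_{k,k}$ and hence a $C_{2k}$; combined with the closure of $S$ (which promotes any link at a saturated hub into an edge of $S$, so that two saturated components of the graph on $C$ formed by the internal edges of $S$ cannot stay separate), this should supply the required slack, with the extremal graph $K_{k-1,\Delta}$-with-pendants showing the count $k-1$ is tight and forced. The only remaining wrinkle, the few values of $\Delta$ so small that $\nu<k$ still leaves the crude bound off by one, is controlled by the auxiliary inequality $\nu\le2\Delta-1$, obtained by charging to the two ends of one matching edge a distinct link from each of the others.

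The case $k=2$ must be handled on its own, because $\nu\le k$ genuinely fails: $C_6$ is $C_4$-free and bipartite, yet its three alternate edges form a strong clique with $\nu=3$. For $C_4$-free bipartite graphs I would instead use that any two vertices share at most one common neighbour, so each pair of disjoint edges of $S$ carries a unique link and the link structure is an honest tournament. A short analysis then gives $\nu(S)\le3$; the case $\nu(S)=3$ forces an essentially $C_6$-like configuration with $|S|=3\le2\Delta-1$, while $\nu(S)\le2$ is settled by the cover identity above, yielding $|S|\le2\Delta-1$. Combining the two regimes proves $\SC(G)\le k\Delta-(k-1)$ for all $k\ge2$.
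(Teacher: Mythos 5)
Your leading-term argument is sound and is essentially the paper's own mechanism: showing that $k+1$ pairwise disjoint edges of a strong clique force a $C_{2k}$ (via the link tournament, a Hamiltonian path, and strong connectivity/pancyclicity) is exactly the content of the paper's Lemma~\ref{lem:matching:path}, and combining it with the K\H onig--Egerv\'ary Theorem gives $|S|\le k\Delta(G)$ just as in the paper. The nesting observation (for $a,a'$ on the same side, $N_S(a)\subseteq N_G(a')$ or $N_S(a')\subseteq N_G(a)$) is also correct. But the actual content of the theorem beyond $k\Delta(G)$ is the constant $-(k-1)$, and there your text stops being a proof: ``nesting forces a near-linear order \ldots this should supply the required slack'' is a plan, not an argument. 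Two concrete obstacles: first, the nesting relation is only a tournament on the cover vertices, not a transitive order, and consecutive containments $N_S(a_i)\subseteq N_G(a_{i+1})$ do not compose (since $N_S\neq N_G$), so no $K_{k,k}$ falls out without further work; second, in the genuinely hard case --- the whole cover on one side, with a cover vertex of full degree $\Delta(G)$ --- one must actually produce a $C_{2k}$ or extract the slack by a careful count. The paper spends the bulk of its proof exactly there: it passes to an $E(H)$-minimal graph, shows a maximum-degree cover vertex $z$ satisfies $N_G(z)=Y$ (so $|Y|=\Delta(G)$), splits on whether the cover meets both sides (where the product $|Z_X|\,|Z_Y|\ge k-1$ supplies the slack), and in the one-sided case builds, via Lemma~\ref{lem:matching:path}, a path through the cover vertices that do not dominate $Y$ and shows its endpoint has no neighbour off the path --- otherwise the path extends to a $C_{2k}$ --- after which the count $\ell\Delta+(k-\ell)+(k-\ell-1)(\Delta-1)\le(k-1)\Delta+1\le k\Delta-(k-1)$ closes the case. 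Nothing equivalent to this appears in your proposal, so the theorem is not proved.

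There are also two secondary gaps. For $k=2$ you assert that $\nu\le 2$ ``is settled by the cover identity above, yielding $|S|\le 2\Delta-1$,'' but the identity alone gives only $2\Delta$; extracting the $-1$ needs a $C_4$-freeness argument (the paper: with disjoint edges $x_1y_1,x_2y_2\in S$ and link $x_1y_2$, either $\deg_H(y_1)\ge2$ or $\deg_H(x_2)\ge2$ creates a $C_4$). And your treatment of small $\Delta$ fails: the auxiliary inequality $\nu\le 2\Delta-1$ gives, e.g., for $\Delta(G)=2$ and $k=4$, only $|S|\le\min\{k,2\Delta-1\}\cdot\Delta=6$, which exceeds the claimed bound $k\Delta-(k-1)=5$. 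The paper handles the regime $\Delta(G)\le k-1$ by invoking the bound $\SC(G)\le\Delta(G)^2$ of Faudree et al.\ (its Theorem~\ref{thm:helppp}), which satisfies $\Delta^2\le k\Delta-(k-1)$ precisely when $1\le\Delta\le k-1$; some such separate ingredient is needed in your write-up as well.
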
 
 
We strengthen Theorem~\ref{thm:conj:biparite} and obtain Theorem~\ref{thm:main:coro}, which improves aforementioned results by Cames van Batenburg, Kang, and Pirot. 
Namely, we prove that the same conclusion can be reached by forbidding only $C_5$ and $\{C_5, C_3\}$ when $k\geq 4$ and $k=3$, respectively, opposed to forbidding all odd cycles.

\begin{theorem}\label{thm:main:coro}
For $k\ge 2$,
if $G$ is a $\{C_5,C_{2k}\}$-free graph and $\Delta(G)\ge 1$, 
then the following holds: 
\begin{itemize}
    \item[\rm(i)] For $k \ge 4$,  $\SC(G)\leq k\Delta(G)-(k-1)$. 
    \item[\rm(ii)] For $k\in\{2,3\}$, if $G$ is also $C_3$-free, then
    $\SC(G)\leq k\Delta(G)-(k-1)$. 
\end{itemize}
\end{theorem}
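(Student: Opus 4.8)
The plan is to bound a maximum strong clique $S$ by exhibiting a small vertex cover of the graph $(V(S),S)$ and counting the edges of $G$ incident to it. The starting point is the elementary counting inequality: if $C\subseteq V(G)$ is a vertex cover of $S$ (a set of vertices meeting every edge of $S$), then
\[
\SC(G)=|S|\le \sum_{v\in C}\deg_G(v)-|E(G[C])|\le |C|\,\Delta(G)-|E(G[C])|,
\]
since every edge of $S$ is incident to $C$ and the edges inside $C$ are double-counted. Thus it suffices to produce a vertex cover $C$ of $S$ with $|C|\le k$ for which $G[C]$ is connected, because then $|E(G[C])|\ge |C|-1$ and the right-hand side is at most $k\Delta(G)-(k-1)$; the case $|C|<k$ follows from the same inequality together with a short separate estimate handling small $\Delta(G)$. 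Everything therefore reduces to two structural facts about strong cliques in $\{C_5,C_{2k}\}$-free graphs: that the vertex cover number is at most $k$, and that a minimum cover may be taken to induce a connected subgraph.

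First I would bound the matching number. Fix a maximum matching $e_1,\dots,e_m$ inside $S$, writing $e_i=a_ib_i$; by maximality every edge of $S$ meets the set of endpoints of $e_1,\dots,e_m$. Because $S$ is a strong clique, for every pair $i\ne j$ at least one of the four ``cross'' edges between $\{a_i,b_i\}$ and $\{a_j,b_j\}$ lies in $G$. The key claim is that $m\le k$, proved by an alternating-cycle construction: a cyclic selection of $k$ of the matching edges together with consistently chosen cross edges traces out a cycle of length exactly $2k$, a forbidden $C_{2k}$. Carrying this out amounts to analysing how the cross edges may be distributed among the four possible endpoint pairs so as to evade closing up a $C_{2k}$; the content of the argument is that every evasive configuration instead forces a forbidden $C_5$ (and, when $k\in\{2,3\}$, a $C_3$). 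I expect this case analysis on the cross-edge patterns to be the main obstacle of the whole proof, and it is precisely what makes the hypotheses bifurcate: when $k\ge 4$ the forbidden even cycle is long enough that triangles arising along the way are harmless, whereas for small $k$ a triangle can itself substitute for part of the would-be $C_{2k}$, so it must be excluded outright.

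Next I would pass from the matching number to the vertex cover number. In the bipartite situation this step is free: the K\H{o}nig--Egerv\'{a}ry Theorem gives $\tau(S)=\nu(S)\le k$, which already recovers Theorem~\ref{thm:conj:biparite}. In the $\{C_5,C_{2k}\}$-free setting these two parameters can differ only through odd substructures, so, again using $C_5$-freeness (and $C_3$-freeness for $k\in\{2,3\}$) to rule out the offending odd configurations, I would show that a minimum vertex cover of $S$ still has size at most $k$; concretely one examines the odd pieces that would witness $\tau(S)>\nu(S)$ and checks that any such piece, sitting inside a strong clique of small line-graph diameter, would create a forbidden $C_5$ or $C_3$.

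Finally I would verify the connectivity refinement responsible for the additive constant $-(k-1)$. Here the strong clique hypothesis does the work: if a minimum cover $C$ split into two parts with no edge of $G$ between them, then choosing an edge of $S$ covered only by the first part and one covered only by the second, the distance-at-most-$2$ condition would supply a connecting edge joining the two parts, contradicting the split. Hence $G[C]$ is connected, $|E(G[C])|\ge |C|-1$, and the counting inequality yields $\SC(G)\le k\Delta(G)-(k-1)$; a direct check then disposes of the degenerate regime of small $\Delta(G)$ and of covers of size strictly less than $k$, completing both parts of the statement.
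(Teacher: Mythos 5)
Your proposal has genuine gaps, and they sit exactly where the difficulty of the theorem lies. The two structural claims your framework rests on --- that a maximum strong clique $S$ in a $\{C_5,C_{2k}\}$-free graph has matching number at most $k$, and that its vertex cover number is also at most $k$ --- are asserted rather than proved: you describe a ``cyclic selection'' of matching edges whose cross-edge patterns must be analysed, and you yourself flag this case analysis as ``the main obstacle of the whole proof,'' but you never carry it out; likewise the passage from $\nu$ to $\tau$ in a non-bipartite graph is waved off as ``ruling out odd substructures.'' Moreover, the matching bound $\nu(S)\le k$ is simply \emph{false} for $k=2$: the $6$-cycle $G=C_6$ is $\{C_3,C_4,C_5\}$-free, its perfect matching $M$ is a strong clique (any two of the three edges are at distance $2$), and $\nu(M)=\tau(M)=3>k$; your counting with $|C|=3$ then only gives $3\Delta(G)-2=4$, which is weaker than the required bound $2\Delta(G)-1=3$. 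The paper circumvents all of this by a quite different route: it passes to an $E(H)$-minimal host graph and proves (Lemma~\ref{lem:H-minimal:clique}) that if the clique subgraph $H$ is triangle-free then $G$ itself is \emph{bipartite}, so K\H{o}nig and the bipartite theorem (Theorem~\ref{thm:conj:biparite}, which handles $k=2$ by a separate ad hoc argument) apply; and if $H$ contains a triangle (only possible in case (i), $k\ge 4$), it shows (Lemma~\ref{lem:H:has:triangle}) that $H$ minus the triangle is a star, giving a cover of size $4\le k$ directly.

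The connectivity refinement, which is your sole source of the additive constant $-(k-1)$, is also logically broken. If the cover splits as $C=C_1\cup C_2$ with no edge of $G$ between the parts, you pick $e_1\in S$ covered only by $C_1$ and $e_2\in S$ covered only by $C_2$ and invoke the distance-at-most-$2$ condition; but the connecting edge this supplies joins \emph{some} endpoint of $e_1$ to \emph{some} endpoint of $e_2$, and these may well be the endpoints lying \emph{outside} $C$, in which case nothing contradicts the disconnection of $G[C]$. (Indeed, in the $C_6$ example above no transversal of $M$ induces a connected subgraph.) The paper never uses connectivity of a cover; its $-(k-1)$ arises from case-specific counting in the bipartite setting --- e.g.\ $|E(H)|\le k\Delta(G)-|Z_X||Z_Y|$ when the minimum cover meets both sides of the bipartition, and a path-to-$C_{2k}$ extension argument plus a degree count when the cover lies entirely on one side. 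So while your counting inequality $|S|\le |C|\Delta(G)-|E(G[C])|$ is correct, every step needed to feed it a suitable $C$ is either missing or fails.
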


When $k\in \{2,3\}$, forbidding $C_3$ in (ii) is necessary as demonstrated by the following graph $H$: 
attach $p$ pendent edges 
to each vertex of a complete graph on $k+1$ vertices.
This graph is $\{C_5,C_{2k}\}$-free, yet $\SC(H)=(k+1)\Delta(H)-k(k+1)/2 >k\Delta(H)-(k-1)$ when $\Delta(H)$ is sufficiently large.

Our second contribution is that we almost prove Conjecture~\ref{conj:general}.
We are able to provide an upper bound that is off by only the constant term. 
Note that Theorem~\ref{thm:general} is a strengthening of Theorem~\ref{thm:van1} (ii) and (iii). 
\begin{theorem}\label{thm:general}
For $k\geq 3$, if $G$ is a $C_{2k}$-free graph and $\Delta(G)\ge 1$, then $\SC(G)\le (2k-1)\Delta(G)+(2k-1)^2$. 
\end{theorem}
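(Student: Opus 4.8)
The plan is to take a maximum strong clique $S$ and analyze the subgraph $H$ with $V(H)=V(G)$ and $E(H)=S$. The engine of the whole argument is the elementary inequality that, for any vertex cover $T$ of $H$, one has $|S|\le \sum_{v\in T}\deg_G(v)-e_S(T)\le |T|\,\Delta(G)$, where $e_S(T)$ counts the edges of $S$ with both endpoints in $T$. Thus it suffices to exhibit a set of at most $2k-1$ vertices meeting all but at most $(2k-1)^2$ edges of $S$; the $(2k-1)^2$ slack is precisely what separates our bound from Conjecture~\ref{conj:general}. In the bipartite situation this route is immediate, since the K\H onig--Egerv\'ary Theorem converts a bound on the matching number into an equal bound on the vertex cover number, which is how one recovers Theorem~\ref{thm:conj:biparite}. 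For general $C_{2k}$-free $G$ the two parameters may differ, and controlling this gap is exactly what costs us the constant term.

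First I would bound the matching number $\nu(H)$. Let $M=\{a_1b_1,\dots,a_mb_m\}$ be a maximum matching of $H$; by maximality every edge of $S$ meets $U:=V(M)$. Because $S$ is a strong clique, any two matching edges $a_ib_i$ and $a_jb_j$ are joined by a \emph{link}: an edge of $G$ with one endpoint in $\{a_i,b_i\}$ and the other in $\{a_j,b_j\}$. Contracting each $a_ib_i$ to a single vertex produces an auxiliary graph on $m$ vertices in which every pair is linked, and any cycle there lifts to a cycle of $G$ whose length equals the number of links used plus the number of matching edges actually traversed. If $m\ge 2k$ I would use this to produce a copy of $C_{2k}$, contradicting $C_{2k}$-freeness and forcing $\nu(H)\le 2k-1$. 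The main obstacle lies exactly here: since only the single length $2k$ is forbidden, I must tune the lifted cycle to have length \emph{precisely} $2k$, whereas in Theorem~\ref{thm:van1}(iii) the forbidden window $\{C_{2k},C_{2k+1},C_{2k+2}\}$ lets any length in a short range do the job. Realizing the exact length requires a careful case analysis of how the chosen links attach to $a_i$ versus $b_i$ (each matching edge contributes either $1$ or $2$ to the cycle length according to whether it is traversed), and it is this rigidity that ultimately forces the weaker additive constant.

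With $\nu(H)\le 2k-1$ in hand, the last step converts the matching into a near-cover, and here one must avoid the naive factor-of-two loss coming from $|U|=2\nu(H)$. Since $|U|\le 2(2k-1)$, the edges of $S$ lying inside $U$ number at most $\binom{2(2k-1)}{2}$, which I absorb into the error term. For the remaining edges, each hanging off a single vertex of $U$, I would bound the Gallai--Edmonds/Tutte--Berge defect $\tau(H)-\nu(H)$ by showing that each odd component responsible for the defect is itself a strong clique inside a $C_{2k}$-free graph, hence of bounded size (again via the link-and-cycle construction); this gives a vertex cover of $H$ of size $2k-1$ after deleting only $O(k^2)$ edges. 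Feeding this cover into the opening inequality yields $\SC(G)=|S|\le (2k-1)\Delta(G)+(2k-1)^2$. I expect the cycle-construction step to be the crux: the counting in the first and last steps is routine once the factor-of-two overcount is removed, but pinning the constructed cycle to length exactly $2k$ is the genuinely delicate point.
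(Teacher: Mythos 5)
Your high-level strategy does coincide with the paper's: show that a matching of size $2k$ whose edges form a strong clique forces a $C_{2k}$, conclude $\alpha'(H)\le 2k-1$, and then convert this matching bound into an edge bound. But both halves of your proposal have genuine gaps, and the first is fatal as written: the cycle construction is never actually carried out. You correctly isolate the difficulty---in the contracted ``link'' graph each matching edge contributes $1$ or $2$ to the lifted cycle length, and only the single length $2k$ is forbidden---but then you simply declare that ``a careful case analysis'' of how links attach will handle it. That case analysis is the technical heart of the paper and occupies essentially all of Section~\ref{sec:general}: Observation~\ref{obs:special} identifies the exceptional \emph{special} matchings (subdivided cliques with a pendant edge) as exactly the configurations where length-$2$ extensions fail; Lemma~\ref{lem:path1} proves by induction that otherwise there are $(x,M)$-paths of \emph{every} length up to $|M|-1$; and Lemma~\ref{lem:matching} assembles the $C_{2m}$ through three separate claims, each of which must dispose of the special configurations by finding short cycles ad hoc, before a final appeal to Lemma~\ref{lem:matching:path}. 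The contraction picture alone does not resolve this length rigidity---there really are obstructing configurations---so the bound $\alpha'(H)\le 2k-1$, the engine of your whole argument, is left unproven.

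Second, your conversion step neither achieves the stated constant nor is the Gallai--Edmonds reduction sound. Deleting all edges of $S$ inside $U=V(M)$ costs up to $\binom{4k-2}{2}=(2k-1)(4k-3)$, which exceeds $(2k-1)^2$ for every $k\ge 2$, so ``absorbing'' those edges into the error term already overshoots the claimed bound by roughly a factor of $2$. Worse, Tutte--Berge/Gallai--Edmonds controls the number of exposed vertices $|V(H)|-2\alpha'(H)$, not the gap $\tau(H)-\alpha'(H)$: that gap can live entirely in the perfectly matched part of the decomposition, where there are no odd components to blame (two triangles joined by an edge has matching number $3$ and vertex cover number $4$, yet every maximum matching is perfect), so bounding odd components cannot yield a cover of size $\alpha'(H)$ even after your deletions. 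The paper's proof avoids vertex covers at this stage entirely. It lets $Y$ be the set of vertices of $V(M)$ with at least two $H$-neighbors outside $V(M)$, observes that maximality of $M$ forces each matching edge to have at most one endpoint in $Y$ (so $|Y|\le m\le 2k-1$), and then runs a degree-sum count: the division by two in $2|E(H)|=\sum_{v}\deg_H(v)$ is exactly what absorbs the edges inside $V(M)$ without the factor-$2$ loss, yielding $|E(H)|\le m\Delta(H)+mD/2\le (2k-1)\Delta(G)+(2k-1)^2$, where $D\le 2m$ is the maximum $H$-degree over $V(M)\setminus Y$. To repair your write-up you would need to prove the cycle lemma in full and replace the delete-and-cover count with a count of this type.
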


The paper is organized as follows. 
We first provide some definitions and prove some lemmas in Section~\ref{sec:prelmi}.
In Section~\ref{sec:bipartite}, we prove Theorem~\ref{thm:conj:biparite}, which is used to show Theorem~\ref{thm:main:coro}.
The proof of Theorem~\ref{thm:general} is provided in Section~\ref{sec:general}.

\section{Preliminaries}\label{sec:prelmi}

We provide some  definitions and useful observations in this section. 

Given a graph $G$, let $S$ (resp. $W$) be a subset of the edges (resp. vertices) of $G$. 
We use $G[S]$ (resp. $G[W]$) to denote the subgraph of $G$ induced by the edges in $S$ (resp. the vertices in $W$).
Let $G-S$ (resp. $G-W$) denote the graph obtained from $G$ by deleting the edges in $S$ (resp. vertices in $W$).
If $S = \{uv\}$ (resp. $W= \{v\}$), then denote $G-S$ by $G-uv$ (resp. $G-W$ by $G-v$). 

Given a graph $G$ and $A, B\subseteq V(G)$, 
let $E_G(A,B)$ denote the set of all edges in $G$ 
joining a vertex in $A$ and a vertex in $B$.
When we denote a cycle or a path, we drop commas for simplicity. For instance, a (directed) path $x_1,x_2,\ldots,x_n$ of length $n-1$ and a (directed) cycle $x_1,x_2,\ldots,x_n,x_1$ is denoted  by  $x_1x_2\ldots x_n$ and $x_1x_2\ldots x_nx_1$, respectively.

A {\it vertex cover} of a graph $G$ is a set $S\subseteq V(G)$ such that every edge of $G$ has an endpoint in $S$. The {\it vertex cover number} of $G$, denoted $\tau(G)$, 
is the size of a minimum vertex cover of $G$. 
The {\it matching number} of $G$, denoted $\alpha'(G)$,  is the size of a maximum matching in $G$. 
For a set of edges $M$, let $V(M)$ denote the set of all endpoints of edges in $M$.
The following is arguably the most famous theorem relating the matching number and the vertex cover number of bipartite graphs. 

\begin{thm-konig}[\cite{konig1931grafok, egervary1931matrixok}]
\label{thm:konig}
If $G$ is a bipartite graph, then $\tau(G)=\alpha'(G)$.
Moreover, for a maximum matching $M$ of $G$, there is a minimum vertex cover that is a subset of $V(M)$.
\end{thm-konig}

We now prove two lemmas that will be often used in the proofs of our theorems. 

\begin{lemma}\label{lem:matching:path}
Let $G$ be a bipartite graph, and $H$ be the subgraph of $G$ induced by a strong clique of $G$.
If $H$ has a matching $M$ of size $m$, 
then $G[V(M)]$ contains a $P_{2m}$ 
containing all  edges in $M$. Moreover, if $m\ge 4$, then $G[V(M)]$ contains 
a $C_{2m-2}$ 
using at least $m-2$ edges in $M$.
\end{lemma}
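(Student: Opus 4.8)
The plan is to translate the statement about the matching $M$ into a statement about a tournament-like digraph on $M$, and then read off the path and the cycle from a directed Hamiltonian path and a suitable directed cycle.

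First I would set up the reduction. Fix a bipartition $X\cup Y$ of $G$ and, for each $e\in M$, write $e=x_ey_e$ with $x_e\in X$ and $y_e\in Y$. Since $M$ lies in a strong clique and is a matching, any two distinct edges $e,f\in M$ are at distance \emph{exactly} $2$ in the line graph: they share no endpoint, so some edge of $G$ joins an endpoint of $e$ to an endpoint of $f$, and as $G$ is bipartite this edge is either $x_ey_f$ or $x_fy_e$, both of whose endpoints lie in $V(M)$. I would then define a digraph $D$ on vertex set $M$ by putting an arc $e\to f$ exactly when $x_ey_f\in E(G)$. The observation above says that every pair of vertices of $D$ is joined by at least one arc, i.e. $D$ is \emph{semicomplete}, and every arc corresponds to an edge of $G[V(M)]$.

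The value of $D$ lies in the following dictionary, whose edges all live in $G[V(M)]$. A directed path $e_1\to e_2\to\cdots\to e_t$ in $D$ corresponds to the path $y_{e_1}x_{e_1}y_{e_2}x_{e_2}\cdots y_{e_t}x_{e_t}$, where each arc $e_i\to e_{i+1}$ supplies the connector $x_{e_i}y_{e_{i+1}}$ and each $e_i$ supplies its matching edge; and a directed cycle $e_1\to\cdots\to e_t\to e_1$ corresponds to the cycle $y_{e_1}x_{e_1}\cdots y_{e_t}x_{e_t}y_{e_1}$, a $C_{2t}$ using all $t$ matching edges. For the first assertion I would invoke the classical fact that every semicomplete digraph has a Hamiltonian directed path (the standard insertion argument for tournaments works verbatim); applying the dictionary to a Hamiltonian path $e_1\to\cdots\to e_m$ yields the desired $P_{2m}$ through all of $M$.

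For the cycle statement I would start again from a Hamiltonian directed path $e_1\to\cdots\to e_m$ of $D$ and split on the arc between its ends. If $e_1\to e_m\in D$, delete the two endpoints $y_{e_1}$ and $x_{e_m}$ and close the remaining path $x_{e_1}y_{e_2}x_{e_2}\cdots y_{e_m}$ with the connector $x_{e_1}y_{e_m}$; the result is a $C_{2m-2}$ that drops one endpoint from each of $e_1$ and $e_m$ and therefore uses the $m-2$ matching edges $e_2,\dots,e_{m-1}$. Otherwise $e_m\to e_1\in D$ by semicompleteness, so $e_1\to\cdots\to e_m\to e_1$ is a Hamiltonian directed cycle and $D$ is strongly connected; since $m\ge 4$ gives $m-1\ge 3$, I would invoke pancyclicity of strongly connected semicomplete digraphs (Moon's theorem in the tournament case) to obtain a directed cycle of length $m-1$, which the dictionary turns into a $C_{2m-2}$ using $m-1\ge m-2$ matching edges. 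This split also explains the hypothesis $m\ge 4$: for $m=3$ the strong case would require a directed $2$-cycle, which cannot exist.

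The conceptual step is the passage to a semicomplete digraph, after which the path assertion is immediate. The genuine obstacle is the cycle assertion in the strongly connected case, where one must shorten a Hamiltonian directed cycle by exactly one vertex. A naive ``bypass a single vertex'' attempt does not suffice — for instance, in the rotational tournament on $5$ vertices no single vertex can be short-cut, yet a directed $4$-cycle still exists — so I expect to rely on the pancyclicity of strong semicomplete digraphs (or to reprove the single instance I need, namely a directed $(m-1)$-cycle, by a short induction) rather than on any ad hoc rerouting.
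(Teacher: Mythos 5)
Your proposal is correct and follows essentially the same route as the paper: the same auxiliary semicomplete digraph $D$ on $M$ (arc $e\to f$ iff $x_ey_f\in E(G)$), a Hamiltonian directed path for the $P_{2m}$, and pancyclicity of strong semicomplete digraphs (Moon-type) for the $C_{2m-2}$ in the strong case, with the trim-and-close construction otherwise. The only difference is in how the two cases are delineated: the paper splits on whether $D$ is strongly connected (using the acyclic ordering of strong components to produce a Hamiltonian path whose ends are joined by a forward arc), whereas you split on whether the forward arc between the ends of a fixed Hamiltonian path exists --- a slightly cleaner dichotomy that arrives at the same two constructions.
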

\begin{proof}
Let $G$ be a bipartite graph with bipartition $(X,Y)$,
and let $M=\{x_1y_1,\ldots,x_{m}y_{m}\}$ be a matching of $H$ where $\{x_1, \ldots, x_m\}\subseteq X$ and $\{y_1, \ldots, y_m\}\subseteq Y$. 
Construct an auxiliary directed graph $D$ where each vertex $w_i$ of $D$ represents the edge $x_iy_i$ in $M$ 
and $(w_i, w_j)$ is an arc of $D$ if $x_iy_j$ is an edge of $G$.  
Note that $D$ is a semi-complete digraph since for distinct $i$ and $j$, 
either $x_iy_j$ or $x_jy_i$ exists in $G$.
Since $D$ contains a tournament, which always has a Hamiltonian path, 
we may assume that $w_1 w_2\ldots w_m$ is a directed path in $D$ by relabelling indices if necessary.
Thus $G[V(M)]$ has a path $y_1x_1 y_2 x_2\ldots y_m x_m$ of length $2m-1$ containing all edges in $M$.

Suppose that $m\ge 4$.  
If $D$ is strongly connected, then $D$ contains a directed cycle of every length. 
Thus, $D$ has a directed cycle of length $m-1$, which corresponds to a 
$C_{2m-2}$ 
in $G[V(M)]$ using $m-1$ edges in  $M$.
If $D$ is not strongly connected, then by the acyclic ordering of strongly connected components, there is a directed $(u,v)$-path of length $m-1$ for some $u,v$ where $(u,v)$ is an arc of $D$.
This corresponds to a $C_{2m-2}$
 in $G[V(M)]$ using $m-2$ edges  in $M$.
\end{proof}

For a strong clique $S$ of a graph $G$, 
the graph $G$ is \textit{$S$-minimal} if 
$S$ is not a strong clique of every proper subgraph of $G$. In other words, removing any vertex or edge of $G$ would violate that $S$ is a strong clique. 

\begin{lemma}
\label{lem:minimal}
Let $S$ be a strong clique of a graph $G$.
If $G$ is $S$-minimal, then the following holds:
\begin{enumerate}[\rm(i)]
    \item Every vertex of $G$ is incident with some edge in $S$.
    \item The diameter of $G$ is at most $3$. 
    \item For every edge $uv \in E(G)\setminus S$, there are two edges $uu', vv'\in S$ such that $uv$ is the only edge joining $uu'$ and $vv'$. 
    \item If $S$ is a maximum strong clique of $G$, then for every edge $uv \in E(G)\setminus S$, 
there is an edge $xy \in S$ whose distance from $uv$ in $G$ is at least $3$. 
\end{enumerate}
\end{lemma}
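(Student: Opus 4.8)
The plan is to first reformulate the distance condition concretely: two edges $e_1$ and $e_2$ of $G$ are at distance at most $2$ precisely when they share an endpoint, or there is an edge of $G$ joining an endpoint of $e_1$ to an endpoint of $e_2$. I call such a shared endpoint or such a connecting edge a \emph{witness} for the pair $\{e_1,e_2\}$. Since deleting a vertex or an edge can only increase distances, a pair of edges of $S$ stays at distance at most $2$ in a subgraph $G'$ exactly when at least one of its witnesses survives in $G'$. This reformulation is the engine for all four parts. For (i), suppose some vertex $v$ is incident with no edge of $S$. Then $v$ is not an endpoint of any edge of $S$, so no shared-endpoint witness is the vertex $v$, and no connecting-edge witness is incident with $v$. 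Hence every witness survives in $G-v$, so $S$ is still a strong clique of the proper subgraph $G-v$, contradicting $S$-minimality.

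For (iii), fix $uv\in E(G)\setminus S$. Since $uv\notin S$, we have $S\subseteq E(G-uv)$, so $S$-minimality forces some pair $e_1,e_2\in S$ to have distance at most $2$ in $G$ but more than $2$ in $G-uv$. Thus every witness for $\{e_1,e_2\}$ uses the edge $uv$; as a shared-endpoint witness uses no edge, $e_1$ and $e_2$ share no endpoint and $uv$ is their \emph{unique} witness. Writing $e_1=uu'$ and $e_2=vv'$ so that $uv$ joins an endpoint of each, the fact that $e_1,e_2$ are disjoint edges of $S$, both distinct from $uv$, forces $u,u',v,v'$ to be four distinct vertices. The only possible witnesses for $\{e_1,e_2\}$ are the edges joining $\{u,u'\}$ to $\{v,v'\}$, that is, those among $uv,uv',u'v,u'v'$; since $uv$ is the unique witness, none of the other three is an edge, which is exactly the assertion.

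Parts (ii) and (iv) then follow quickly from the same reformulation. For (ii), given vertices $x$ and $y$, use (i) to pick edges $xx',yy'\in S$; as $S$ is a strong clique these lie at distance at most $2$, and unwinding the witness (a shared endpoint, or a connecting edge joining $\{x,x'\}$ to $\{y,y'\}$) exhibits a walk from $x$ to $y$ of length at most $3$. In particular any two vertices are joined by such a walk, so $G$ is connected and $\mathrm{diam}(G)\le 3$. For (iv), suppose $S$ is maximum and some $uv\in E(G)\setminus S$ were at distance at most $2$ from every edge of $S$; then every pair of edges of $S\cup\{uv\}$ would be at distance at most $2$, making $S\cup\{uv\}$ a strong clique of size $|S|+1$ and contradicting maximality. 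Hence some $xy\in S$ has distance at least $3$ from $uv$.

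I expect the only real care to lie in part (iii): one must argue that $uv$ is not merely \emph{a} witness but the \emph{unique} one, verify that the four endpoints $u,u',v,v'$ are genuinely distinct (so that the phrase ``the only edge joining $uu'$ and $vv'$'' is meaningful), and confirm that ruling out the three remaining connecting edges $uv',u'v,u'v'$ truly exhausts all possible witnesses for $\{e_1,e_2\}$. The other three parts are direct applications of the witness reformulation together with minimality for (i), minimality for (iii), and maximality for (iv).
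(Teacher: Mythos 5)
Your proof is correct and follows essentially the same approach as the paper: both rest on the observation that two edges are at distance at most $2$ exactly when they share an endpoint or some edge joins them, and then apply deletion arguments for (i) and (iii), the strong clique property for (ii), and maximality for (iv). Your ``witness'' formulation merely makes explicit what the paper's proof uses implicitly, and your direct phrasing of (ii) and (iii) is just the contrapositive of the paper's proofs by contradiction.
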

\begin{proof}
(i) Suppose that $G$ has a vertex $v$ that is not incident with an edge in $S$. 
Since the distance between two edges in $S$ is the same in both $G$ and $G-v$, $S$ is also a strong clique of $G-v$.
This is a contradiction to the assumption that $G$ is $S$-minimal.

(ii) Suppose that $G$ has two vertices $u$ and $v$ where the distance between $u$ and $v$ is at least $4$. 
Then, for each $uu', vv' \in S$, the distance between $uu'$ and $vv'$ is at least $3$.
This is a contradiction to the assumption that $S$ is a strong clique of $G$.

(iii) Let $uv \in E(G) \setminus S$.
Suppose that $G$ has an edge other than $uv$ that joins $uu'$ and $vv'$ for every two edges $uu', vv' \in S$.
Then, for each pair of edges in $S$, the distance between them is the same in both $G-uv$ and $G$. 
Thus $S$ is also a strong clique of $G-uv$.
This is a contradiction to the assumption that $G$ is $S$-minimal.

(iv) Suppose that $S$ is a maximum strong clique of $G$, and $uv \in E(G) \setminus S$. 
If every edge in $S$ has distance at most $2$ from $uv$, then $S \cup \{uv\}$ is also a strong clique of $G$.
This is a contradiction to the assumption that $S$ is a maximum strong clique of $G$. 
\end{proof}

We end this section with a result from~\cite{faudree1990strong}.

\begin{theorem}[\cite{faudree1990strong}]\label{thm:helppp}
For a bipartite graph $G$, $\SC(G) \leq \Delta(G)^2$. 
\end{theorem}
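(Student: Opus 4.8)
The plan is to prove the bound by a direct double-counting argument that localizes an arbitrary strong clique $S$ around a single vertex of largest weight. Write the bipartition as $(X,Y)$, and for a vertex $w$ let $d_S(w)$ denote the number of edges of $S$ incident with $w$. First I would pick a vertex maximizing $d_S$; by the symmetry between $X$ and $Y$ I may assume it is a vertex $x^\ast\in X$, and set $t=d_S(x^\ast)=\max_w d_S(w)$. Let $y_1,\dots,y_t$ be the $S$-neighbors of $x^\ast$. The argument then rests on understanding, for an arbitrary edge $xy\in S$ with $x\in X$ and $y\in Y$, what its being within distance $2$ of each edge $x^\ast y_j$ forces.

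The key local step uses bipartiteness: any connecting edge between $\{x,y\}$ and $\{x^\ast,y_j\}$ must join the two sides $X$ and $Y$, so the only possibilities are $xy_j\in E(G)$ or $x^\ast y\in E(G)$, besides sharing an endpoint. I would split $S$ into $S_1=\{xy\in S: y\in N(x^\ast)\}$ and $S_2=S\setminus S_1$. For an edge $xy\in S_2$ we have $y\notin N(x^\ast)$, so sharing an endpoint with $x^\ast y_j$ (which would put $y=y_j\in N(x^\ast)$ or force $y\in N(x^\ast)$) and having $x^\ast y\in E(G)$ are both impossible; hence $xy_j\in E(G)$ for every $j$, that is, $x$ lies in the common neighborhood $I:=\bigcap_{j=1}^{t}N(y_j)$. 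Since $I\subseteq N(y_1)$, this yields the crucial bound $|I|\le\Delta(G)$.

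To finish, I would count the two parts against $\Delta=\Delta(G)$. Each $y\in N(x^\ast)$ carries at most $t$ edges of $S$, by maximality of $t$, so $|S_1|\le |N(x^\ast)|\cdot t\le \Delta t$. Each $x\in I$ can contribute to $S_2$ only through edges $xy$ with $y\in N(x)\setminus N(x^\ast)$, and since $x$ is adjacent to all of $y_1,\dots,y_t\in N(x^\ast)$ there are at most $\Delta-t$ such $y$; hence $|S_2|\le |I|\cdot(\Delta-t)\le \Delta(\Delta-t)$. Adding the two estimates gives $|S|\le \Delta t+\Delta(\Delta-t)=\Delta^2$, as required.

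The main obstacle is exactly this final balancing. The two naive bounds $|S_1|\le\Delta^2$ and $|S_2|\le\Delta^2$ only yield $2\Delta^2$, and the whole point is to express both quantities through the same parameter $t$ so that the cross terms $\Delta t$ cancel. Getting this clean cancellation requires (i) choosing $x^\ast$ to maximize $d_S$ \emph{globally}, so that $d_S(y)\le t$ may be used on the $S_1$ side, and (ii) observing on the $S_2$ side that membership of $x$ in $I$ simultaneously forces $|I|\le\Delta$ and leaves each such $x$ at most $\Delta-t$ admissible neighbors. I do not expect to need the K\H onig--Egerv\'ary Theorem or Lemma~\ref{lem:matching:path} here; the estimate is entirely local.
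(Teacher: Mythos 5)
Your proof is correct, and there is nothing in the paper to compare it against: Theorem~\ref{thm:helppp} is stated there as a black-box citation to Faudree et al.~\cite{faudree1990strong} and is never proved in the paper (it is only invoked once, to dispose of the case $\Delta(G)\le k-1$ in the proof of Theorem~\ref{thm:conj:biparite}). So your argument stands on its own, and it checks out in every detail. The bipartite reduction of the distance-$2$ condition is right: for $xy, x^\ast y_j\in S$ with $x,x^\ast\in X$ and $y,y_j\in Y$, any connecting edge must cross the bipartition, so the only options are a shared endpoint, $xy_j\in E(G)$, or $x^\ast y\in E(G)$; for $xy\in S_2$ the first and third are ruled out precisely because $y\notin N(x^\ast)$, forcing $x\in I=\bigcap_j N(y_j)$ and hence $|I|\le\deg(y_1)\le\Delta$. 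The two counts are also sound: $|S_1|\le\sum_{y\in N(x^\ast)}d_S(y)\le\Delta t$ uses the global maximality of $t$, and $|S_2|\le|I|(\Delta-t)\le\Delta(\Delta-t)$ uses both $|I|\le\Delta$ and $\Delta-t\ge 0$ (which holds since $t\le\deg(x^\ast)\le\Delta$), together with the fact that each $x\in I$ already spends $t$ of its degree on $y_1,\dots,y_t\in N(x^\ast)$. The degenerate case $S=\emptyset$ (where $y_1$ would not exist) is trivial, and the sum telescopes to $\Delta^2$ exactly as you say. It is a clean, purely local argument, avoiding the K\H onig--Egerv\'ary machinery that the paper uses for its own (different) bipartite theorem, and it is tight on $K_{\Delta,\Delta}$.
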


\section{Proofs of Theorems~\ref{thm:conj:biparite} and \ref{thm:main:coro}}\label{sec:bipartite}

In this section, we first prove Theorem~\ref{thm:conj:biparite}, then show Theorem~\ref{thm:main:coro} by using Theorem~\ref{thm:conj:biparite}. 

\begin{proof}[Proof of Theorem~\ref{thm:conj:biparite}]

Let $k\ge 2$, and $G$ be a $C_{2k}$-free bipartite graph with bipartition $(X,Y)$ and $\Delta(G)\ge 1$.
Let $H$ be the subgraph of $G$ induced by a maximum strong clique of $G$. 
Recall that our goal is to show $|E(H)|\le k \Delta(G) -(k-1)$.

We first consider the case when $k=2$.
Let $M$ be a maximum matching of $H$ where 
$M=\{x_1y_1,\ldots,x_my_m\}$ and $x_i \in X$ and $y_i \in Y$ for every $i \in \{1,2,\ldots,m\}$.
The theorem is trivial when $m=1$, so assume $m\ge 2$. 
Without loss of generality assume $x_1y_2 \in E(G)$.
If either $\deg_H(y_1) \ge 2$ or $\deg_H(x_2) \ge 2$, then $G$ contains a $C_4$.  
Thus, if $m=2$, then 
$|E(H)|\le 2\Delta(G)-1$.
Now, we assume $m\ge 3$. 
If $x_1$ has three neighbors $y_1,y_2,y_j$ in $V(M)$ for some $3\le j \le m$, 
then since there must be an edge between $\{x_2,y_2\}$ and $\{x_j,y_j\}$, 
either  $x_1y_{2}x_{2}y_{j}x_1$  or $x_1y_{2}x_{j}y_{j}x_1$ is a $C_4$ in $G$, which is a contradiction. 
Hence, $x_1$ has at most two neighbors $y_1$ and $y_2$  in $V(M)$.
Similarly, every vertex in $V(M)$ has at most two neighbors in $V(M)$.
This implies that $m=3$, and $G[V(M)]$ is a $6$-cycle $x_1y_2x_2y_3x_3y_1x_1$.
 If $E(H)\setminus M$ is non-empty, then without loss of generality, let $x_1z \in E(H)\setminus M$. 
 Since $x_1z, x_3y_3 \in E(H)$, it follows that $x_3z \in E(G)$ and $x_1zx_3y_1x_1$ is a $C_4$, which is a contradiction.
Therefore, $E(H)=M$, so $|E(H)|=3 \le 2\Delta(G)-1$.

Now, we suppose that $k\ge 3$.
We may assume  $G$ is $E(H)$-minimal by removing unnecessary vertices and edges of $G$.
If $\Delta(G)\leq k-1$, then $|E(H)|\le \Delta(G)^2 \le k \Delta(G)-(k-1)$ where the first inequality holds by Theorem~\ref{thm:helppp}, so we may assume that $\Delta(G) \ge k$.

By Lemma~\ref{lem:matching:path}, 
we may assume that $H$ does not contain a matching of size $k+1$, so
by the K\H onig--Egerv\'ary Theorem, $\tau(H)\le k$.
If either $\Delta(H) < \Delta(G)$ or $\tau(H) <k$, then $|E(H)|\le \tau(H)\Delta(H)< k \Delta(G)-(k-1)$.
So, we may assume that $\Delta(H)=\Delta(G)$ and $\tau(H)=k$.

Let $Z$ be a minimum vertex cover of $H$.
If each vertex in $Z$ has degree less than $\Delta(H)$ in $H$, 
then $|E(H)|\le |Z|(\Delta(H)-1) < k \Delta(G)-(k-1)$. Hence, we may assume that there exists $z \in Z$ such that $\deg_H(z)=\Delta(H)=\Delta(G)$.
Without loss of generality, assume that $z \in X$.

Suppose that $zy \notin E(G)$ for some $y \in Y$. 
Since $G$ is $E(H)$-minimal, $y$ is incident with an edge $xy$ of $H$ by Lemma~\ref{lem:minimal} (i). 
Now, $x$ is adjacent to $z'$ for every $z' \in N_G(z)$ since $zz', xy \in E(H)$. 
See the first figure of Figure~\ref{fig:thm1.3}.
\begin{figure}[h!]
    \centering
    \includegraphics[height=4cm,page=2]{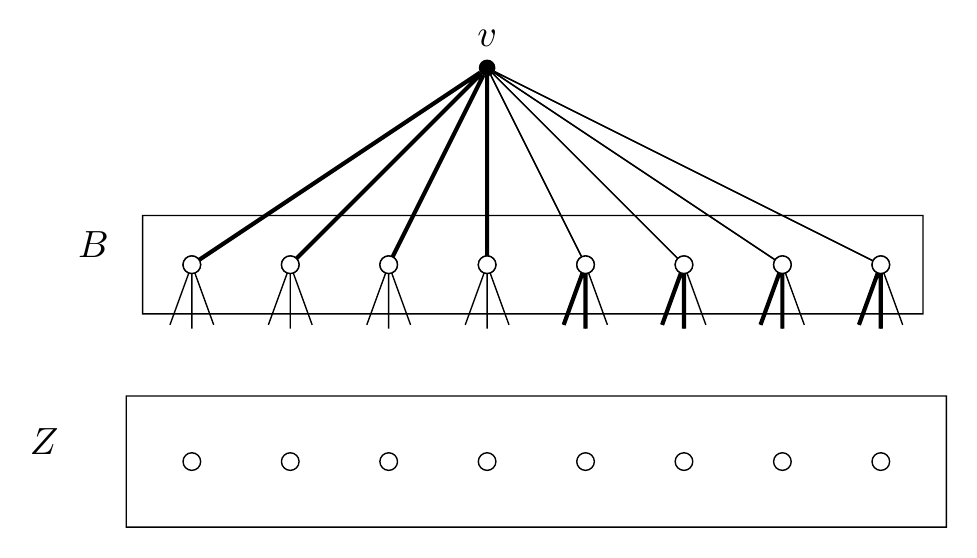}
    \caption{Illustrations for Theorem~\ref{thm:conj:biparite}. Thick edges are guaranteed to be in $H$.}
    \label{fig:thm1.3}
\end{figure}
Therefore, $\deg_G(x)\ge \deg_G(z)+1 =\Delta(G)+1$, which is a contradiction.
Hence, $Y=N_G(z)=N_H(z)$.

Suppose that $Z\cap Y\neq \emptyset$.
Let $Z_X=Z\cap X$ and $Z_Y=Z\cap Y$ so that $|Z|=|Z_X|+|Z_Y|=k$. 
Since $Z$ is a vertex cover of $H$, each edge of $H$ is in either 
$E_H(X, Z_Y)$ or $E_H(Z_X,Y\setminus Z_Y)$. 
See the second figure of Figure~\ref{fig:thm1.3}. 
Since each vertex in $Z_Y$ has degree at most $\Delta(G)$ in $H$, we know $|E_H(X, Z_Y)|\le |Z_Y|\Delta(G)$.
Also, since $|Y|=\Delta(G)$, we know $|E_H(Z_X,Y\setminus Z_Y)| \le |Z_X| (\Delta(G)-|Z_Y|)$.
Therefore,
\[
|E(H)|\le |Z_Y|\Delta(G)+|Z_X| (\Delta(G)-|Z_Y|) = (|Z_X|+|Z_Y|) \Delta(G)-|Z_X||Z_Y| \le k\Delta(G)-(k-1).
\]
The last inequality holds since both $Z_X$ and $Z_Y$ are not empty. 

Now, suppose that $Z\subseteq X$. 
If $x\in X\setminus Z$, then since $G$ is $E(H)$-minimal, 
$x$ is incident with an edge $xy'$ of $H$ by Lemma~\ref{lem:minimal} (i). 
This is a contradiction since $xy'$ is not covered by $Z$. 
Therefore, $X\setminus Z=\emptyset$, so $X=Z$ and thus $|X|=k$.

Let $X'=\{x\in X\mid N_G(x)=Y\}$, and let $|X'|=\ell$.
Since $z\in X'$ and $G$ is $C_{2k}$-free, we know $1 \le \ell \le k-1$.
By the K\H onig--Egerv\'ary Theorem, since $\tau(H)=k$, there is a matching of size $k$ in $H$.
Thus, $H-X'$ has a matching of size $k-\ell$, and this matching is a strong clique of $G$. 
Thus, it follows from  Lemma~\ref{lem:matching:path} that there is a path $P$ of length $2(k-\ell)-1$ in $G-X'$ using all vertices in $X\setminus X'$. 
Let $x$ and $y$ be the ends of $P$ in $X\setminus X'$ and $Y$, respectively.
Let $Y'=Y\setminus V(P)$. 
Note that  $|Y'|=\Delta(G)-(k-\ell) \ge \ell$.
If $x$ has a neighbor $y'$ in $Y'$, 
then we can extend $P$ to a $C_{2k}$ by using all vertices in $X'$ and $\ell$ vertices in $Y'$ including $y'$, which is a contradiction.
See the third figure of Figure~\ref{fig:thm1.3}.
Hence, $x$ has no neighbor in $Y'$. Then, 
\begin{align*}
|E(H)|&=|E_H(X',Y)|+|E_H(\{x\},Y)|+|E_H(X\setminus (X'\cup \{x\}),Y)|\\
&\le \ell\Delta(G) + (k-\ell)+(k-\ell-1)(\Delta(G)-1)\\
&\le (k-1)\Delta(G)+1\\
&\le k\Delta(G)-(k-1),
\end{align*}
where the last inequality holds since $\Delta(G)\ge k$.
This completes the proof.
\end{proof}

In order to prove Theorem~\ref{thm:main:coro}, we first show the following two lemmas. 

\begin{lemma} \label{lem:H-minimal:clique}
Let $G$ be a $C_5$-free graph, and $H$ be the subgraph of $G$ induced by a maximum strong clique of $G$.
If $H$ is $C_3$-free, and $G$ is $E(H)$-minimal, then $G$ is bipartite.
\end{lemma}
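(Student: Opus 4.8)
The plan is to assume $G$ is not bipartite and derive a contradiction by studying a shortest odd cycle. First I would record the consequences of $E(H)$-minimality supplied by Lemma~\ref{lem:minimal}: every vertex of $G$ is incident with an edge of $H$, so $V(G)=V(H)$; the diameter of $G$ is at most $3$; and for each edge $uv\in E(G)\setminus E(H)$ there are edges $uu',vv'\in E(H)$ for which $uv$ is the \emph{only} edge joining $\{u,u'\}$ and $\{v,v'\}$, so that $u'v$, $uv'$, $u'v'$ are all non-edges of $G$. These facts, together with $C_5$-freeness and the hypothesis that $H$ is $C_3$-free, are the only inputs I intend to use.

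Let $C=v_0v_1\cdots v_{n-1}v_0$ be a shortest odd cycle of $G$. Two standard facts about shortest odd cycles are that $C$ is chordless and that for any two of its vertices the $G$-distance equals the distance along $C$: a strictly shorter $G$-path between $v_i$ and $v_j$ would, combined with one of the two arcs of $C$, close into an odd closed walk of length less than $n$, hence contain an odd cycle shorter than $C$. Since $G$ is $C_5$-free we have $n\ne 5$, and since the diameter is at most $3$ while a shortest odd cycle of length at least $9$ would contain two vertices at cyclic (hence $G$-) distance at least $4$, we get $n\le 7$. Thus $n\in\{3,7\}$, and it remains to eliminate both values.

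For $n=3$, write the triangle as $xyz$. As $H$ is $C_3$-free, some edge, say $xy$, lies outside $E(H)$; applying the minimality property to $xy$ yields $xx',yy'\in E(H)$ with $xy$ their unique joining edge, and a short check shows $x,y,z,x',y'$ are five distinct vertices with $x'y,xy',x'y'\notin E(G)$. The crucial point is that the adjacencies of $z$ to $x'$ and $y'$ are constrained: if both $zx'$ and $zy'$ are edges, then $z\,x'\,x\,y\,y'\,z$ is a $5$-cycle, contradicting $C_5$-freeness. I would then dispose of the remaining patterns by re-applying the minimality property (and, where convenient, part (iv) of Lemma~\ref{lem:minimal}) to the other two triangle edges, organised according to how many of $xy,yz,zx$ lie in $E(H)$, so as to again exhibit either a $C_5$ in $G$ or a triangle inside $H$.

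For $n=7$ the cycle is induced with all cyclic distances realised in $G$, so the neighbours and non-neighbours of each $v_i$ among $v_0,\dots,v_6$ are completely determined. I would feed each cycle edge $v_iv_{i+1}$ lying outside $E(H)$ into the minimality property: part (iv) forces any far $E(H)$-edge witnessing $v_iv_{i+1}\notin E(H)$ to lie on the arc opposite $v_iv_{i+1}$ or entirely off $C$, and tracking these strong-clique edges together with $C_5$-freeness should again produce a $C_5$ or an odd closed walk shorter than $C$. I expect the main obstacle to be precisely this bookkeeping in the two remaining lengths: the geometric reductions (diameter bound, shortest-cycle distances) are routine, but the final contradictions hinge on carefully locating the $E(H)$-edges guaranteed by minimality — controlling the subcases for the triangle and, in the $C_7$ case, the possibly off-cycle strong-clique neighbours of the cycle vertices.
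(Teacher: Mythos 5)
Your reduction of the problem is sound and matches the paper's framework: using Lemma~\ref{lem:minimal}~(ii) to bound the diameter, chordlessness and realized distances on a shortest odd cycle, and $C_5$-freeness, you correctly conclude that a shortest odd cycle has length $3$ or $7$. But both eliminations --- which are the entire substance of the lemma --- are left as declarations of intent (``I would then dispose of the remaining patterns\ldots'', ``should again produce a $C_5$\ldots''), and the ingredient that actually closes them is missing: an \emph{extremal choice} of the cycle. The paper does not take an arbitrary triangle or an arbitrary shortest odd cycle; it takes one incident with the \emph{maximum number of edges of $H$}, and this potential function is what every contradiction is measured against. For the triangle case, your proposed organization ``according to how many of $xy,yz,zx$ lie in $E(H)$'' is not how the argument goes through: after obtaining $xx',yy'\in E(H)$ via part~(iii) and ruling out both $zx',zy'\in E(G)$, the paper invokes part~(iv) to get an edge $uv\in E(H)$ at distance at least $3$ from $xy$, uses the strong-clique condition to force $ux',vy'\in E(G)$, deduces $zu,zv\notin E(G)$ (else a $C_5$), hence $xz,yz\notin E(H)$, then uses the extremal choice of the triangle to rule out $zx',zy'\in E(G)$ individually (a triangle $zx'x$ would contain the $H$-edge $xx'$ and beat $xyzx$), and finally uses part~(i) to get a fresh $H$-edge $zz'$ whose strong-clique constraint against $uv$ forces the $C_5$ $z'ux'xz$ or $z'vy'yz$. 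None of these steps is recoverable from your sketch, and without the extremal choice the sub-case $zx'\in E(G)$, $zy'\notin E(G)$ has no visible contradiction.

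The gap is even clearer in the length-$7$ case: you predict the contradiction will be ``a $C_5$ or an odd closed walk shorter than $C$,'' but that is not what the argument produces. The paper finds two consecutive cycle edges $x_1x_2,x_2x_3\notin E(H)$ (this uses that the seven cycle edges, pairwise constrained by the strong-clique condition on a chordless $C_7$, can include at most three members of $E(H)$), takes $H$-edges $x_1y_1,x_4y_4$ from part~(i), forces $y_1y_4\in E(G)$ from the strong-clique condition and the realized distance $3$, verifies $y_1,y_4$ avoid $\{x_1,x_4,x_5,x_6,x_7\}$, and then exhibits the \emph{new} $7$-cycle $x_1y_1y_4x_4x_5x_6x_7x_1$ --- same length, no $C_5$, no shorter odd walk --- which carries strictly more edges of $H$ than $C$. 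The contradiction is with the extremal choice of $C$, not with its minimality of length. Since your plan neither introduces this potential function nor identifies an alternative terminating mechanism, the rerouting you envisage could simply produce another $7$-cycle and loop; as written, the proof does not close.
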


\begin{proof}
We will show that $G$ does not contain an odd cycle.
We first show that $G$ has no $C_3$.

Suppose to the contrary that $G$ has a $C_3$. Let $xyzx$ be a $C_3$ of $G$ incident with the maximum number of edges of $H$.
Since $H$ has no $C_3$, we may assume that $xy \not \in E(H)$.
Since $G$ is $E(H)$-minimal, 
there are edges $xx'$ and $yy'$ in $H$ whose distance is at least $3$ in $G-xy$ by Lemma~\ref{lem:minimal} (iii).
Moreover, since $xy\not\in E(H)$ and $E(H)$ is a maximum strong clique of $G$, there is an edge $uv\in E(H)$ 
whose distance from $xy$ is 
at least $3$ in $G$ by Lemma~\ref{lem:minimal} (iv).
Thus, $x,y,z,x',y',u,v$ are all distinct. 
From the pairwise distances between $xx',yy'$, and $uv$,
we may assume that $ux',vy'\in E(G)$ since $G$ is $C_5$-free. 
Also, $zu, zv \not\in E(G)$ since if $zu \in E(G)$ and $zv \in E(G)$, then $zuvy'yz$ and $zvux'xz$, respectively, is a $C_5$ of $G$. 
This further implies that $uv$ has distance at least $3$ to each of $xz$ and $yz$ in $G$.
So, $xz, yz \notin E(H)$. See the first figure of Figure~\ref{fig:H-minimal:clique}.

\begin{figure}[h!]
    \centering
    \includegraphics[height=2.5cm,page=4]{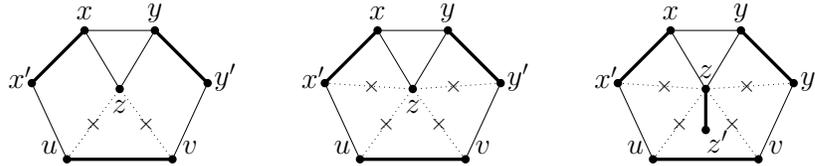}
    \caption{Illustrations for Lemma~\ref{lem:H-minimal:clique}}
    \label{fig:H-minimal:clique}
\end{figure}

Also, $zx', zy'\not\in E(G)$ since if $zx'\in E(G)$ and $zy'\in E(G)$, then $zx'x$ and $zy'y$, respectively, is a $C_3$ of $G$ containing an edge of $H$,
which is a contradiction to the choice of $xyzx$.
See the second figure of Figure~\ref{fig:H-minimal:clique}.
Since $G$ is $E(H)$-minimal, there is an edge $zz'\in E(H)$ where $z'\not\in \{x,y,z,u,v,x',y'\}$ by Lemma~\ref{lem:minimal} (i).
See the third figure of Figure~\ref{fig:H-minimal:clique}.
Since the distance between $zz'$ and $uv$ must be at most $2$, 
either $z'u\in E(G)$ or $z'v\in E(G)$.
In either case, $z'ux'xz$ or  $z'vy'yz$ is a $C_5$ of $G$, which is a contradiction.
Therefore, $G$ is $C_3$-free.

Now, we prove that $G$ is bipartite.
Suppose to the contrary that $G$ is not bipartite, so let  $C:x_1x_2\ldots x_{2m+1}x_1$ be a smallest odd cycle in $G$ 
with the maximum number of edges in $H$.
By the minimality of $|C|$, $C$ has no chords in $G$.
Moreover, $m=3$ since the diameter of $G$ is at most $3$ by Lemma~\ref{lem:minimal} (ii). 
Since $C$ has no chords and $G$ is $C_5$-free, for each $i\in\{1,2,\ldots,7\}$, the distance between $x_i$ and $x_{i+3}$ is exactly $3$ in $G$ where addition in the indices is modulo 7.
Since $C$ has no chords in $G$, there are two consecutive edges, say $x_1x_2, x_2x_3$, of $C$ not in $H$.
Since $G$ is $E(H)$-minimal, there are edges $x_1y_1, x_4y_4$ of $H$ by Lemma~\ref{lem:minimal} (i).
Since  
the distance between $x_1$ and $x_4$ is $3$,
it follows that $y_1y_4 \in E(G)$.
We also have $y_1,y_4 \notin\{x_1,x_4,x_5,x_6,x_7\}$ since 
the distances between $x_1$ and $x_5$ and between $x_4$ and $x_7$ are exactly $3$.
Note that the cycle $x_1y_1y_4x_4x_5x_6x_7x_1$ has more edges of $H$ than $C$, because $x_1y_1, x_4y_4 \in E(H)$ but $x_1x_2, x_2x_3 \notin E(H)$, which is a contradiction to the choice of $C$.
Therefore, $G$ is bipartite. 
\end{proof}

\begin{lemma} \label{lem:H:has:triangle}
Let $G$ be a $C_5$-free graph with $\Delta(G)\ge 1$, and $H$ be the subgraph of $G$ induced by a maximum strong clique of $G$. 
If $H$ contains a $C_3$, then $\SC(G)\le 4\Delta(G)-3$.
\end{lemma}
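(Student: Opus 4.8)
The plan is to fix a triangle $abc$ in $H$, so that $ab,bc,ca$ all lie in the strong clique $S:=E(H)$, and to bound $|S|$ by splitting the edges of $S$ into those incident with $T:=\{a,b,c\}$ and those disjoint from $T$. The incident edges number at most $\deg_H(a)+\deg_H(b)+\deg_H(c)-3\le 3\Delta(G)-3$, so it will suffice to show that the disjoint edges contribute at most $\Delta(G)$, and that when they do, the incident count drops accordingly. Throughout I intend to use only that $S$ is a strong clique containing the triangle together with $C_5$-freeness; in particular every edge of $S$ lies within distance $2$ of each of $ab,bc,ca$, and $E(H)$-minimality of $G$ is not needed.

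The first key step is a structural claim about a disjoint edge $uv\in S$ (with $u,v\notin T$). Being within distance $2$ of all three triangle edges forces the combined neighborhood of $\{u,v\}$ in $T$ to meet each of the pairs $\{a,b\},\{b,c\},\{c,a\}$; and if neither endpoint were adjacent to two vertices of $T$, one obtains a $C_5$ through the ``third'' triangle vertex (e.g.\ $u\sim a$, $v\sim b$ gives the cycle $uacbv$, using $ac,cb$). Hence every disjoint edge has an endpoint in the set $P$ of vertices adjacent to at least two vertices of $T$. A short case analysis, again producing $5$-cycles, then pins down $P$: two vertices of $P$ attached to different pairs, or any vertex of $P$ adjacent to all of $T$, always create a $C_5$ (for instance two vertices adjacent to $\{a,b\}$ and to $\{b,c\}$ give the cycle $wacw'b$). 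Thus either $P$ is a single vertex adjacent to all three of $a,b,c$, or every vertex of $P$ is adjacent to the same pair, say $\{a,b\}$, and $P$ is an independent set.

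In the first case the disjoint edges are exactly the $S$-edges at the unique $p\in P$ other than $pa,pb,pc$, so there are at most $\Delta(G)-3$ of them and $|S|\le 4\Delta(G)-6$. The main work is the second case, where $P$ may be large. Here I first show each disjoint edge $u_0v_0$ (with $u_0\in P$) has $v_0$ adjacent to none of $T$: if $v_0\sim a$ then $v_0acbu_0$ is a $C_5$. Next I claim such an edge forces $\deg_H(c)=2$: for any $S$-neighbor $x$ of $c$ with $x\notin\{a,b\}$, since $c\not\sim u_0,v_0$ the strong-clique condition between $cx$ and $u_0v_0$ forces $x\sim u_0$ or $x\sim v_0$; the former gives the $C_5$ $xu_0abc$, and ruling it out forces $x\sim v_0$, which gives the $C_5$ $xv_0u_0ac$. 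Thus $c$ has no $S$-neighbor beyond $a,b$, eliminating all incident edges at $c$ past the triangle. Finally I bound the disjoint edges: any two vertex-disjoint such edges $u_1v_1,u_2v_2$ would be at distance $\ge 3$, because $C_5$-freeness forbids each connecting edge $u_1u_2$ (independence of $P$), $v_1v_2$ (cycle $v_1u_1au_2v_2$), $u_1v_2$ (cycle $u_1v_2u_2ab$), and $u_2v_1$; so the disjoint edges pairwise intersect, and being bipartite between $P$ and the $T$-avoiding vertices they form a star, giving at most $\Delta(G)$ of them. Hence (when disjoint edges exist) $|S|\le \deg_H(a)+\deg_H(b)-1+|E_{II}|\le (2\Delta(G)-1)+\Delta(G)=3\Delta(G)-1\le 4\Delta(G)-3$ for $\Delta(G)\ge 2$, while if no disjoint edge exists the incident bound $3\Delta(G)-3$ already suffices; the case $\Delta(G)=1$ is vacuous since then $H$ has no triangle.

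I expect the independent-pair case to be the main obstacle: naively $P$ can be large and each of its vertices can carry many edges, so the target bound is invisible from the incident/disjoint split alone. The resolution is the two $C_5$-arguments above — that a single disjoint edge collapses $\deg_H(c)$ to $2$, and that all disjoint edges must share a common vertex — which together convert a potentially quadratic contribution into a linear one. The only place where care is genuinely required is checking that each invoked $5$-cycle is real, i.e.\ that its five vertices are distinct and all five edges (including the two triangle edges it uses) are present; the rest is routine degree bookkeeping.
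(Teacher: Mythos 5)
Your proof is correct and takes essentially the same route as the paper: both use the strong-clique condition together with $C_5$-freeness to show that every edge of $S$ disjoint from the triangle has an endpoint adjacent to two triangle vertices, that any two such edges must intersect (else a forced connecting edge yields a $C_5$), hence they form a star, and then count edges covered by the three triangle vertices plus the star center. Your extra refinements (the dichotomy on $P$ and the $\deg_H(c)=2$ claim) sharpen the constants in individual cases but are not needed for the bound $4\Delta(G)-3$, which the paper gets directly from the size-$4$ vertex cover consisting of the triangle and the star center.
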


\begin{proof}
Let $C:xyzx$ be a $C_3$ of $H$.
Suppose to the contrary that $H-\{x,y,z\}$ has two edges $uv$ and $u'v'$, where $u,v,u',v'$ are all distinct. 
Since $E(H)$ is a strong clique of $G$ and $G$ is $C_5$-free, 
we may assume that $ux,uy\in E(G)$.
Similarly, 
we may assume that $u'$ is adjacent to two vertices of $C$.
Since $G$ is $C_5$-free, $u'$ is adjacent to both $x$ and $y$.
By the distance between $uv$ and $u'v'$, an edge of $G$ connects $\{u,v\}$ and $\{u',v'\}$. 
In each case, however, we can find $C_5$, which is a contradiction.

Thus $H-\{x,y,z\}$ is a star, and let $v$ be its  center vertex.
Then $\{v,x,y,z\}$ is a vertex cover of $H$, and so 
$|E(H)|\le 4\Delta(H)-3\le 4\Delta(G)-3$.
\end{proof}

\begin{proof}[Proof of Theorem~\ref{thm:main:coro}]
Let $G$ be a $\{C_5,C_{2k}\}$-free graph with $\Delta(G)\ge 1$.
Let $H$ be the subgraph of $G$ induced by a maximum strong clique of $G$. 
We may assume that $G$ is $E(H)$-minimal by removing unnecessary vertices and edges of $G$.
If $H$ does not contain a $C_3$, then by Lemma~\ref{lem:H-minimal:clique}, $G$ is bipartite, and so by Theorem~\ref{thm:conj:biparite}, it holds that $|E(H)|\le k\Delta(G)-(k-1)$.

If $H$ contains a $C_3$, then it is case (i) and so $k\ge  4$, and so $|E(H)|\le 4\Delta(G)-3\le k\Delta(G)-(k-1)$ by Lemma~\ref{lem:H:has:triangle}.
\end{proof}

\section{Proof of Theorem~\ref{thm:general}}\label{sec:general}

In this section, we prove Theorem~\ref{thm:general}.
Let $M$ be a matching of a graph $G$. For a vertex $x\in V(M)$,
an {\it $(x, M)$-path} is a path $P$ in $G[V(M)]$ starting with $x$ such that the last edge of $P$ is not in $M$ and for every distinct $u,v \in V(P)$, 
if $uv$ is an edge in $M$, then $uv \in E(P)$.

For a vertex $x \in V(M)$, let $x'$ denote the neighbor of $x$ such that $xx'\in M$. 
Note that $(x')'=x$.
A matching $M=\{x_1x_1',\ldots,x_mx_m'\}$ is {\it $x_1$-special} if 
$x_i x_j \in E(G)$ if and only if $i=1\neq j$, $x_i' x_j' \in E(G)$ if and only if $2\le i<j \le m$, and $x_ix_j' \notin E(G)$ for $1\le i\neq j \le m$.
In other words, an $x_1$-special matching of size $m$ can be obtained from a complete graph on  $m$ vertices by subdividing all edges incident with a vertex $x_1$ and adding a pendent edge to $x_1$. See Figure~\ref{fig:sp_matching} for an illustration. 
We say $M$ is {\it special} if it is $x$-special for some vertex $x\in V(M)$.
\begin{figure}[h!]
    \centering
    \includegraphics[height=4cm,page=8]{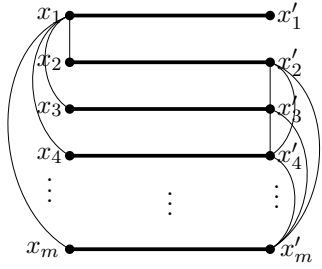}
    \caption{An illustration for an $x_1$-special matching $M$. Note that the vertices not adjacent in the figure are not adjacent in $G$.}  \label{fig:sp_matching}
\end{figure}

 We will use the following observation frequently. 

\begin{observation}\label{obs:special}
For a graph $G$, let $M$ be a matching of $G$ that is also a strong clique of $G$.
Suppose that $|M|=m\ge 3$. 
If $x\in V(M)$, then the following holds:
\begin{enumerate}[(a)]
\item There is no $(x, M)$-path of length $2$ if and only if $M$ is $x$-special.
\item If $M$ is $x$-special, then for every $\ell\in \{1,\ldots,m\}\setminus\{2\}$, there is an $(x, M)$-path of length $\ell$. 
\end{enumerate}

\end{observation}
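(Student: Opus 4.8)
The plan is to prove the two parts separately, first translating both the hypothesis and the conclusion into concrete adjacency statements. Writing $M=\{x_1x_1',\dots,x_mx_m'\}$, the assumption that $M$ is a strong clique means precisely that for every pair of distinct indices $i,j$ there is at least one edge of $G$ joining $\{x_i,x_i'\}$ to $\{x_j,x_j'\}$: the two matching edges share no endpoint, so distance at most $2$ in the line graph forces a connecting edge. For part (a) I would also unwind what an $(x,M)$-path of length $2$ is, namely a path $x,a,b$ with $x,a,b$ distinct, with $ab\notin M$, and with no matching edge having both endpoints in $\{x,a,b\}$ unless it is one of the two path edges.

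For part (a), the forward implication is a direct check: if $M$ is $x$-special with $x=x_1$, the only $V(M)$-neighbours of $x_1$ are $x_1'$ and $x_2,\dots,x_m$; a path $x_1,x_1',b$ is impossible since $x_1'$ has no neighbour in $V(M)$ besides $x_1$, and a path $x_1,x_j,b$ forces $b=x_j'$, whose terminal edge lies in $M$. For the reverse implication (the crux), I would assume no $(x,M)$-path of length $2$ exists, set $x=x_1$, and extract two local facts: (A) $x_1'$ has no $V(M)$-neighbour other than $x_1$, and (B) every $V(M)$-neighbour $a\ne x_1'$ of $x_1$ has its $V(M)$-neighbourhood contained in $\{x_1,a'\}$. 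Each fact holds because a violating neighbour would immediately yield a valid length-$2$ path. Combining (A) with the strong-clique edge between $\{x_1,x_1'\}$ and each $\{x_j,x_j'\}$, vertex $x_1$ must see at least one endpoint of every other matching edge, which I relabel to be $x_j$. The key remaining step is that $x_1$ sees exactly one endpoint of $e_j$: were $x_1$ adjacent to both $x_j$ and $x_j'$, then (B) would confine the neighbourhoods of $x_j$ and $x_j'$ to $\{x_1,x_j,x_j'\}$, so no edge could join $e_j$ to a third matching edge $e_i$, contradicting the strong-clique condition; this is exactly where $m\ge 3$ is used. Finally (B) rules out all edges $x_ix_j$ and $x_ix_j'$ with $i,j\ge 2$, so the guaranteed edge between $e_i$ and $e_j$ must be $x_i'x_j'$, giving the clique on $\{x_2',\dots,x_m'\}$ and completing the verification that $M$ is $x$-special.

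For part (b) I would simply exhibit the required paths. A length-$1$ path is $x_1,x_2$, whose single edge $x_1x_2\notin M$. For $3\le \ell\le m$ I would take $x_1,x_2,x_2',x_3',x_4',\dots,x_\ell'$, which has exactly $\ell$ edges: the edges $x_1x_2$ and $x_2x_2'$ followed by the clique edges $x_i'x_{i+1}'$ for $2\le i\le \ell-1$. Its last edge $x_{\ell-1}'x_\ell'$ is not in $M$, and among its vertices the only matching edge with both endpoints present is $x_2x_2'$, which is used; hence the $(x,M)$-path conditions hold. Length $2$ is excluded, in agreement with part (a), since any length-$2$ attempt is forced to terminate with a matching edge.

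The main obstacle is the reverse direction of part (a). The entire argument rests on converting ``no length-$2$ path'' into the local neighbourhood constraints (A) and (B), and then on the one genuinely global step—ruling out that $x_1$ sees both ends of some $e_j$—which is precisely where the hypothesis $m\ge 3$ enters, via the strong-clique condition applied to two matching edges avoiding $x_1$. Once these adjacency facts are established, the remaining verifications are routine bookkeeping.
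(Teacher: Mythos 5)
The paper states this Observation without any proof (it is presented as routine), so there is no in-paper argument to compare against; the relevant question is simply whether your proof is correct, and it is. Your translation of the strong-clique hypothesis (every two matching edges are joined by some edge of $G$, since disjoint edges at line-graph distance $2$ must have a connecting edge), the forward direction of (a), the reverse direction via facts (A) and (B), and the explicit paths in (b) all check out. In particular, the step where $m\ge 3$ enters — ruling out that $x_1$ sees both endpoints of some $e_j$ by confining $N(x_j)$ and $N(x_j')$ to $\{x_1,x_j,x_j'\}$ and then invoking the strong-clique edge to a third matching edge — is exactly the right global argument, and your path $x_1,x_2,x_2',x_3',\dots,x_\ell'$ in (b) verifies all three conditions in the definition of an $(x,M)$-path, including the requirement that every matching edge with both endpoints on the path be used by the path. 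One small point deserves to be made explicit: your one-line justification of fact (B), namely that ``a violating neighbour would immediately yield a valid length-$2$ path,'' fails in the single case where the violating neighbour of $a$ is $x_1'$, since the walk $x_1,a,x_1'$ contains both endpoints of the matching edge $x_1x_1'$ without traversing it and hence is \emph{not} a valid $(x,M)$-path. This case is harmless only because you establish (A) first: (A) asserts that $x_1'$ has no $V(M)$-neighbour other than $x_1$, which forbids the edge $ax_1'$ altogether. Stating that (B) is proved \emph{using} (A), rather than by the same one-line argument, closes the only loose end in an otherwise complete and correct proof.
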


\begin{lemma}\label{lem:path1}
For a graph $G$, let $M$ be a matching that is also a strong clique of $G$.
Suppose that $|M|=m\geq 2$ and $M$ is not $x$-special for some $x \in V(M)$.
If $x$ has a neighbor in $V(M)\setminus \{x,x'\}$,
then there is an $(x, M)$-path of length $\ell$ for every $\ell\in\{1,\ldots, m-1\}$.
\end{lemma}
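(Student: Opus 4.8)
The plan is to produce, for each target length $\ell\in\{1,\dots,m-1\}$, an $(x,M)$-path of length exactly $\ell$, writing $x=x_1$ and $x'=x_1'$. The two smallest lengths are immediate: a neighbor $w\in V(M)\setminus\{x,x'\}$ of $x$ (which exists by hypothesis) gives the length-$1$ path $xw$, and since $M$ is not $x$-special, Observation~\ref{obs:special}(a) supplies a length-$2$ path. So the real content is the range $3\le\ell\le m-1$.

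First I would record an augmenting observation: no maximal $(x,M)$-path can leave a matching edge completely untouched. Indeed, if $P$ is an $(x,M)$-path with last vertex $v$, then $v'\notin V(P)$ (otherwise $vv'\in M$ would be forced to be the last edge of $P$, contradicting the definition of an $(x,M)$-path). If some edge $x_jx_j'$ has both endpoints off $P$, the strong-clique condition forces an edge between $\{v,v'\}$ and $\{x_j,x_j'\}$; if it meets $v$ we extend $P$ by one vertex, and if it meets only $v'$ we append $v'$ and then the relevant endpoint of $x_jx_j'$, extending $P$ by two. Either way $P$ was not maximal. Hence a maximal $(x,M)$-path touches all $m$ edges and has length at least $m-1$, so an $(x,M)$-path of length $\ge m-1$ certainly exists.

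It then remains to fill in all intermediate lengths, and the cleanest formulation is to show that the set of achievable lengths is, apart from the possible value $2$, an interval starting at $1$: concretely, that if an $(x,M)$-path of length $\ell-1\ge 2$ exists and $\ell\le m-1$, then one of length $\ell$ exists. Combined with the length-$1$ and length-$2$ paths above and the bound that some path has length $\ge m-1$, this yields every value in $\{1,\dots,m-1\}$. I would attempt this unit step either by truncating/rerouting a path that is one longer, or equivalently by running the augmenting move above but stopping it at the desired length.

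The main obstacle is precisely this unit interpolation when the tail of the path ends in a \emph{matched pair} followed by a single vertex, i.e.\ a block pattern ``$\dots,a,a',v$'' with $aa'\in M$ and $a'v\notin M$. Deleting the last vertex then exposes the matching edge $aa'$ as the final edge, which is forbidden, so a naive truncation changes the length by two rather than one and can skip the target. Overcoming this requires rerouting through the partner vertex $a'$ (opening the pair into a singleton, or re-choosing which endpoint terminates the path) using the strong-clique edges, and it is here that the hypotheses do real work: the neighbor assumption is what launches the length-$1$ path, while non-$x$-specialness is exactly the condition (via Observation~\ref{obs:special}(a)) that removes the unique genuinely missing value, namely length $2$. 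I expect the bulk of the proof to be the careful case analysis showing that for every $\ell$ with $3\le\ell\le m-1$ such a rerouting is always available, so that no further gaps occur.
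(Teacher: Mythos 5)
There is a genuine gap. Your handling of $\ell=1$ and $\ell=2$ is fine, and your augmenting observation is correct and nicely argued: a maximal $(x,M)$-path cannot miss a matching edge entirely, since the strong-clique condition yields an edge between $\{v,v'\}$ and the missed edge, and appending either one vertex (through $v$) or two vertices (through $v'$, which you correctly note lies off the path) extends it. But the entire content of the lemma for $3\le\ell\le m-1$ is the ``unit interpolation'' step, and you do not prove it --- you only describe the obstacle (extension or truncation moves the length by $2$ when the relevant attachment is only at $v'$, or when the tail has the pattern $a,a',v$) and then state that you ``expect the bulk of the proof to be the careful case analysis showing that \ldots such a rerouting is always available.'' That expected case analysis is precisely the theorem; as written, nothing rules out the bad configuration in which \emph{every} matching edge untouched by the current path attaches only to $v'$ and no local rerouting of the tail exists. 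A plan that isolates the hard case but leaves it unresolved is not a proof.

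For comparison, the paper does not interpolate between paths at all; it inducts on $\ell$ with a change of base point. Given a neighbor $y$ of $x$ in $V(M)\setminus\{x,x'\}$, set $M'=M\setminus\{xx'\}$ and try to prepend $xy$ to a $(y,M')$-path of length $\ell-1$ supplied by the induction hypothesis. The point is that when the hypothesis is not applicable, Observation~\ref{obs:special} forces extreme rigidity: either $M'$ is $y$-special with $\ell-1=2$, or $M'$ is $y'$-special with $\ell-2=2$, and in these two small, completely determined configurations the strong-clique condition (an edge must join $\{x,x'\}$ to $\{z,z'\}$) lets one write down a path of length $\ell$ explicitly. It is exactly this ``failure implies special structure'' dichotomy that your rerouting argument lacks; without it, or some substitute structural analysis, your proposal does not close.
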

\begin{proof}

We use induction on $\ell$ to prove Lemma~\ref{lem:path1}. Let $y$ be a neighbor of $x$ in $V(M)\setminus\{x, x'\}$. 

If $\ell=1$, then $xy$ is an $(x, M)$-path of length $1$.
If $\ell=2$, then $m\ge 3$, and by  Observation~\ref{obs:special}~(a),
there is an $(x, M)$-path of length $2$. 

Assume that $\ell \ge 3$, so $m\ge 4$.
Let $M'=M\setminus\{xx'\}$. Note that $|M'|\ge 3$.
If there is an $(y, M')$-path of length $\ell-1$, then by prepending $xy$ to the path, we obtain an $(x, M)$-path of length $\ell$.
So, let us assume that no such path exists.
By the induction hypothesis, either
$M'$ is $y$-special or $y$ has no neighbors in $V(M')\setminus\{y, y'\}$. 
By Observation~\ref{obs:special}, 
there are only two possible cases: 
either $M'$ is $y$-special and $\ell-1=2$, or $M'$ is $y'$-special and $\ell-2=2$ (so $m\geq 5$).

For the first case, since $m\ge 4$ and $M'$ is $y$-special,  
$y$ has two neighbors $z, w\in V(M')\setminus \{y,y'\}$ where $z'w' \in E(G)$. 
See the first figure of Figure~\ref{fig:path}.
Since $xx'$ and $zz'$ are part of a strong clique of $G$, there must be an edge between $\{x,x'\}$ and $\{z,z'\}$.
In each case, there is an $(x, M)$-path of length $3=\ell$, which is $xzz'w'$, $xz'zy$, $xx'zy$, or $xx'z'w'$. 

\begin{figure}[h!]
    \centering
    \includegraphics[height=2.5cm,page=9]{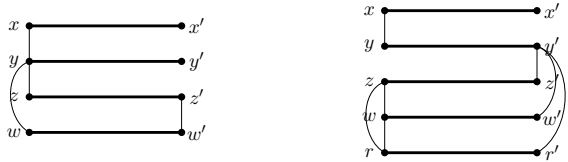}
    \caption{Illustrations for Lemma~\ref{lem:path1}}  \label{fig:path}
\end{figure}

For the second case, 
since $m\ge 5$ and $M'$ is $y'$-special, 
$y'$ has three neighbors $z', w', r'\in V(M')\setminus \{y,y'\}$ where $z$, $w$, and $r$ are pairwise adjacent to each other. 
See the second figure of Figure~\ref{fig:path}.
Since $xx'$ and $zz'$ are part of a strong clique of $G$, there must be an edge between $\{x,x'\}$ and $\{z,z'\}$.
In each case, there is an $(x, M)$-path of length $\ell=4$, which is $xzww'y'$, $xz'zwr$, $xx'zwr$, or $xx'z'zw$. 

Thus, there is an $(x,M)$-path of length $\ell$ for every $\ell\in\{1,\ldots, m-1\}$, and this completes the proof.
\end{proof}

\begin{lemma}\label{lem:matching}
For a graph $G$, let $M$ be a matching of $G$ that is also a strong clique of $G$.
If $|M|=2m\geq6$, then $G$ contains a $C_{2m}$.
\end{lemma}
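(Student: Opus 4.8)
The plan is to split according to whether $M$ is special, and then funnel both branches into a single closing step. First I would dispose of the special case. Suppose $M$ is $x_1$-special with $|M|=2m\ge 6$. By definition the $2m-1$ vertices $x_2',\ldots,x_{2m}'$ induce a clique, each $x_i'$ is joined to its partner $x_i$, and $x_1$ is adjacent to every $x_i$ with $i\ge 2$. Choosing two indices $i\ne j$ in $\{2,\ldots,2m\}$ and a path through exactly $2m-3$ of the clique vertices from $x_i'$ to $x_j'$ (possible since $2\le 2m-3\le 2m-1$), I close it up as the cycle $x_1,x_i,x_i',\ldots,x_j',x_j,x_1$. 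A direct count gives $2m$ vertices and $2m$ edges, each present by the special structure, so this is the desired $C_{2m}$.

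Now suppose $M$ is not special. Because $M$ is a strong clique with at least two edges, some connecting edge exists, so I may pick a start vertex $x\in V(M)$ having a neighbor in $V(M)\setminus\{x,x'\}$; as $M$ is not $x$-special, Lemma~\ref{lem:path1} produces $(x,M)$-paths of every length $1,\ldots,2m-1$. The construction there begins with a non-matching edge, so the partner $x'$ lies on no such path, and since the last edge of an $(x,M)$-path is non-matching, the endpoint's partner is off the path as well. Hence for an $(x,M)$-path $P$ on $s$ vertices from $x$ to an endpoint $z$, both $x'$ and $z'$ are available outside $P$.

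To close $P$ I would apply the strong-clique property to the matching edges $xx'$ and $zz'$: at least one of the four edges between $\{x,x'\}$ and $\{z,z'\}$ is present. If it is $xz$, then $P$ together with $xz$ is a cycle on $s$ vertices; if it uses exactly one primed endpoint, prepending $x'$ (via $x'x\in M$) or appending $z'$ (via $zz'\in M$) yields a cycle on $s+1$ vertices; if it is $x'z'$, doing both yields a cycle on $s+2$ vertices. Since paths of every length $s\in\{2,\ldots,2m\}$ are available, I would select the length so that the closed cycle has exactly $2m$ vertices: length $2m-1$ for a direct ($xz$) closure, $2m-2$ for a one-prime closure, or $2m-3$ for the two-prime closure.

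The main obstacle is precisely this coordination. The number of primed endpoints used by the guaranteed connecting edge is dictated by the path's endpoint, which Lemma~\ref{lem:path1} does not let me prescribe length-by-length, so the delicate point is to rule out that all three admissible lengths close \emph{off target} simultaneously. I expect to resolve this by controlling the endpoint rather than the path: either by forcing a direct $xz$ closure at length $2m-1$ (arranging for the length-$(2m-1)$ path to terminate at a neighbor of $x$), or by using that $M$ is not special to exclude the degenerate, pendant-type configurations in which every connecting edge at an end-pair is forced onto its primed vertex. This endpoint bookkeeping, not the path construction itself, is where I anticipate the real work to lie.
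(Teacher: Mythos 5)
Your special case is handled correctly: in an $x_1$-special matching of size $2m$, the clique on $\{x_2',\ldots,x_{2m}'\}$ together with the hub $x_1$ does yield a $C_{2m}$ exactly as you describe. But the non-special case, which is the substance of the lemma, has a genuine gap. There is also a factual error in its setup: you assert that an $(x,M)$-path never contains $x'$ because ``the construction there begins with a non-matching edge.'' The definition of an $(x,M)$-path only forbids the \emph{last} edge from lying in $M$, and the paths actually produced in the proof of Lemma~\ref{lem:path1} include ones of the form $xx'zy$ and $xx'z'w'$, which begin with the matching edge $xx'$. So when the guaranteed connecting edge is $x'z$ or $x'z'$, prepending $x'$ may be impossible, and even your case analysis of the closing step is not sound as stated.

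More fundamentally, the coordination problem you explicitly defer --- that the connecting edge between $\{x,x'\}$ and $\{z,z'\}$ may use a number of primed vertices that fails to match the chosen path length, for all three admissible lengths simultaneously --- is precisely the difficulty the whole lemma turns on, and the paper does not resolve it by ``endpoint bookkeeping'' within your framework; it uses a different mechanism entirely. The paper fixes $x\in V(M)$ maximizing the size of a set $W(x)$ of neighbors of $x$ that meets each other matching edge at most once (so $|W(x)|\ge m$), proves three claims showing that $G$ already contains a $C_{2m}$ unless both $W(x)$ and $W'(x)=\{w'\mid w\in W(x)\}$ are independent sets, and in the remaining case notes that $G[W(x)\cup W'(x)]$ is bipartite, so Lemma~\ref{lem:matching:path} supplies a path $u_1u_1'u_2u_2'\cdots u_{m-1}u_{m-1}'u_m$ of exactly the right length whose two ends both lie in $W(x)\subseteq N_G(x)$; the cycle is closed through the hub $x$, whose adjacency to \emph{all} of $W(x)$ is what pins the cycle length to $2m$. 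No analogous control over the terminal vertex exists for the paths given by Lemma~\ref{lem:path1}, so the proposal as it stands does not prove the lemma, and completing it along your suggested lines would require essentially reconstructing the paper's argument.
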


\begin{proof}
For every $x\in V(M)$, 
let $W(x)$ be a maximum subset of $V(M)\cap N_G(x)$ such that $W(x)$ contains at most one endpoint of every edge in $M\setminus\{xx'\}$.
Also define $W'(x)=\{w'\mid w\in W(x)\}$.

\begin{claim}\label{claim:indep}
For every $x\in V(M)$, if $W(x)$ is not an independent set of $G$, then $G$ contains a $C_{2m}$.
\end{claim}
\begin{proof}
Let $x\in V(M)$ such that $W(x)$ is not an independent set of $G$.

First, suppose that $G[W(x)]$ contains a $P_3$, that is, $yz, zw \in E(G)$ for some $y,z,w \in W(x)$. 
Let $M' = M\setminus\{xx',yy',zz'\}$, so $|M'|=2m-3\ge 3$. 
Assume that there is a $(w, M')$-path $P$ of length $2m-4$ ending at $r\in V(M')\setminus\{w, w'\}$.
See the first figure of Figure~\ref{fig:matching1}.
Since $xx'$ and $rr'$ are part of a strong clique of $G$, there must be an edge between $\{x, x'\}$ and $\{r, r'\}$. 
In each case, by adding $xyzw$, $x'xzw$, $r'xzw$, or $r'x'xw$ to $P$, we obtain a $C_{2m}$. 

\begin{figure}[h!]
    \centering
    \includegraphics[height=3cm,page=10]{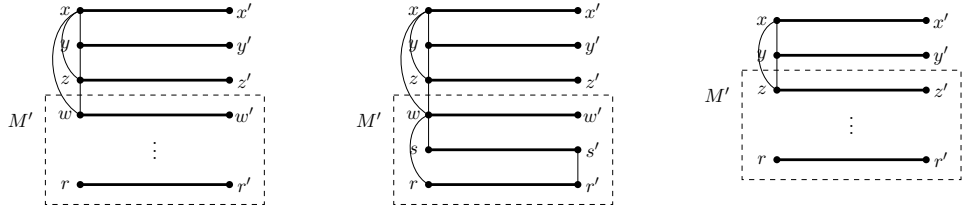}
    \caption{Illustrations for Claim~\ref{claim:indep}}  \label{fig:matching1}
\end{figure}

Hence, let us assume that there is no such path. 
By Observation~\ref{obs:special} and Lemma~\ref{lem:path1}, there are only two possible cases:
either $M'$ is $w$-special and $2m-4=2$, 
or $M'$ is $w'$-special and $2m-5=2$. 
Note that the second case is impossible.
For the first case,  let $M'=\{ww', ss',rr'\}$ where $wr,ws,s'r' \in E(G)$.
See the second figure of Figure~\ref{fig:matching1}.
Since $xx'$ and $rr'$ are part of a strong clique of $G$, there must be an edge between $\{x,x'\}$ and $\{r,r'\}$.
In each case, there is a $C_6$, which is $xrr's'swx$, $xr's'swzx$, $xx'rwzyx$, or $xx'r's'swx$. 
Therefore, $G$ contains a $C_{2m}$.

\smallskip

Now, suppose that $G[W(x)]$ has no $P_3$.
Since $W(x)$ is not an independent set of $G$, 
$yz \in E(G)$ for some $y,z \in W(x)$. Let $M'=M\setminus\{xx',yy'\}$, so  $|M'|=2m-2 \ge 4$.
Assume that there is a $(z,M')$-path $P$ of length $2m-4$ ending at $r \in V(M')\setminus \{z,z'\}$.
See the third figure of Figure~\ref{fig:matching1}.
Since $xx'$ and $rr'$ are part of a strong clique of $G$, there must be an edge between $\{x, x'\}$ and $\{r, r'\}$. 
If $rx\notin E(G)$, then we obtain a $C_{2m}$ by adding $x'xyz$, $r'xyz$, or $r'x'xz$ to $P$, depending on the case. 
So we may assume that $rx \in E(G)$. 
By symmetry, we may assume that $ry\in E(G)$, which is a contradiction to our assumption since $ryz$ is a $P_3$ in $G[W(x)]$.

Hence, let us assume that there is no such path. By Observation~\ref{obs:special} and Lemma~\ref{lem:path1}, there are two possible cases: either
$M'$ is $z$-special and $2m-4=2$, or $M'$ is $z'$-special and $2m-5=2$. 
Note that the second case is impossible.
For the first case,  
 $M'$ is a special matching of size $4$, so $G[V(M')]$ contains a $C_6$. 
See Figure~\ref{fig:sp_matching}.
Therefore, $G$ contains a $C_{2m}$.
\end{proof}

\begin{claim}\label{claim:indep2}
For every $x\in V(M)$, if $G[W'(x)]$ contains a $P_3$, then 
$G$ contains a $C_{2m}$.
\end{claim}
\begin{proof}
Suppose that for some  $x\in V(M)$, $G[W'(x)]$ contains a $P_3$, that is, $y'z', z'w' \in E(G)$ for some $y',z',w' \in W'(x)$. 
See the first figure of Figure~\ref{fig:matching2}.
If $m=3$, then $xyy'z'w'wx$ is a $C_{2m}$, so assume that $m\geq 4$. 
Note that $\{y,z,w\}$ is an independent set of $G$ by Claim~\ref{claim:indep}.
We may further assume that $y'w' \notin E(G)$ since otherwise $W(y')$ is not an independent set of $G$, which is a contradiction to Claim~\ref{claim:indep}. 
So, either $yw' \in E(G)$ or $wy' \in E(G)$. 
Without loss of generality, assume that $yw' \in E(G)$.

\begin{figure}[h!]
    \centering
    \includegraphics[height=3cm,page=11]{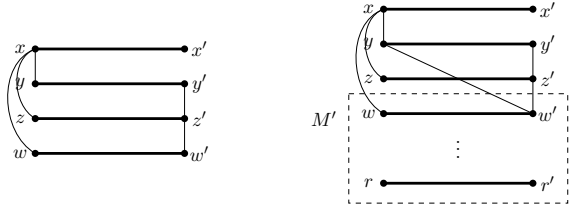}
    \caption{Illustrations for Claim~\ref{claim:indep2}}  \label{fig:matching2}
\end{figure}

Let $M'=M\setminus\{xx',yy',zz'\}$, so $|M'|=2m-3 \ge 5$. 
Assume that there exists a $(w',M')$-path $P$ of length $2m-5$ ending at $r\in V(M')\setminus\{w, w'\}$. 
See the second figure of Figure~\ref{fig:matching2}.
Since $xx'$ and $rr'$ are part of a strong clique of $G$, there must be an edge between $\{x,x'\}$ and $\{r,r'\}$.
In each case, by adding $xyy'z'w'$, $x'xzz'w'$, $r'xzz'w'$, or $r'x'xyw'$ to $P$, we obtain a $C_{2m}$.  

Hence, let us assume that there is no such path. 
By Observation~\ref{obs:special} and Lemma~\ref{lem:path1}, since $2m-5 \ge 3$, it must be that $M'$ is $w$-special and $2m-6=2$. 
Then, since 
$M'$ is a special matching of size $5$, $W(s)$ is not an independent set of $G$ for some vertex $s \in V(M')$.
See Figure~\ref{fig:sp_matching}.
Thus, $G$ contains a $C_{8}$ by Claim~\ref{claim:indep}.
Therefore, $G$ contains a $C_{2m}$.
\end{proof}

Let $x \in V(M)$ such that $|W({x})|$ is maximum.  
Note that $|W'(x)|=|W(x)|\ge m \ge 3$. 
From Claims~\ref{claim:indep}~and~\ref{claim:indep2}, we assume that 
$W(x)$ is an independent set of $G$ and $W'(x)$ has no $P_3$.

\begin{claim}\label{claim:indep3}
If $W'(x)$ is not an independent set of $G$, then $G$ contains a $C_{2m}$.
\end{claim}
\begin{proof} 
Suppose that  $W'(x)$ is not an independent set of $G$, that is, $y'z' \in E(G)$ for some $y',z' \in W'(x)$.
Let $w' \in W'(x)\setminus \{y',z'\}$.
See the first figure of Figure~\ref{fig:matching3}.
Recall that $\{y,z,w\}$ is an independent set of $G$, and $w'y', w'z' \notin E(G)$ since $W'(x)$ has no $P_3$.
If $w$ is adjacent to both $y'$ and $z'$, then $W(z')$ is not an independent set of $G$, which is a contradiction to Claim~\ref{claim:indep}. 
Hence, we may assume that either $wy' \notin E(G)$ or $wz'\notin E(G)$. 
Without loss of generality, assume that $wy' \notin E(G)$, which implies that $yw' \in E(G)$. 
Let $M'=M\setminus\{yy',zz',ww'\}$, so $|M'|=2m-3\geq 5$.

Suppose that $wz' \in E(G)$. 
If $m=3$, then $xyw'wz'z$ is a $C_{2m}$, so assume that $m\ge 4$.
See the first figure of Figure~\ref{fig:matching3}.
Assume that there is an $(x, M')$-path $P$ of length $2m-5$ ending at $r\in V(M')\setminus\{x, x'\}$.
See the second figure of Figure~\ref{fig:matching3}.
Since $rr'$ and $zz'$ are part of a strong clique of $G$, there must be an edge between $\{r,r'\}$ and $\{z,z'\}$.
In each case, by adding $zz'y'yx$, $z'ww'yx$, $r'zz'wx$, or $r'z'y'yx$ to $P$, we obtain a $C_{2m}$. 

\begin{figure}[h!]
    \centering
    \includegraphics[height=3cm,page=12]{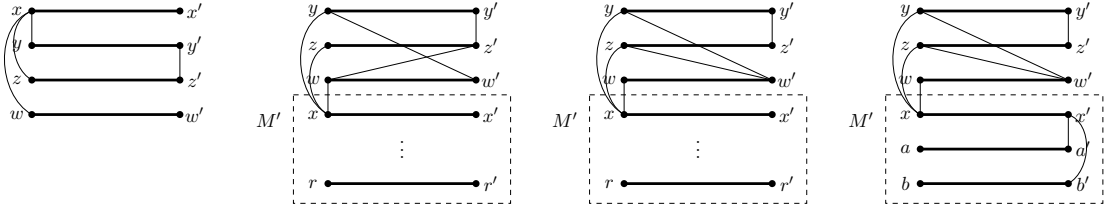}
    \caption{Illustrations for Claim~\ref{claim:indep3}}  \label{fig:matching3}
\end{figure}

Hence, let us assume that there is no such path.
By Observation~\ref{obs:special} and Lemma~\ref{lem:path1}, since $2m-5 \ge3$, 
it must be that $M'$ is $x'$-special and $2m-6=2$. 
Then, since $M'$ is a special matching of size $5$, $W(s)$ is not an independent set of $G$ for some vertex $s \in V(M')$.
See Figure~\ref{fig:sp_matching}.
Thus, $G$ contains a $C_{8}$ by Claim~\ref{claim:indep}.
Therefore, $G$ contains a $C_{2m}$.

Now, suppose that $wz'\notin E(G)$ so that $w'z\in E(G)$. 
Assume that there is an $(x, M')$-path $P$ of length $2m-5$ ending at $r\in V(M')\setminus\{x, x'\}$.
See the third figure of Figure~\ref{fig:matching3}.
Since $rr'$ and $zz'$ are part of a strong clique of $G$, there must be an edge between $\{r,r'\}$ and $\{z,z'\}$.
In each case, by adding $zz'y'yx$, $z'zw'wx$, $r'zw'wx$, or $r'z'y'yx$ to $P$, we obtain a $C_{2m}$.

Hence, let us assume that there is no such path. 
By Observation~\ref{obs:special} and Lemma~\ref{lem:path1}, there are three possible cases:
(i) $x$ has no neighbors in $V(M') \setminus \{x,x'\}$ and $2m-5=1$, (ii) $M'$ is $x$-special and $2m-5=2$, and (iii) $M'$ is $x'$-special and $2m-6=2$. 
Note that the second case is impossible.
For the first case,
let $\{a',b'\}=W(x')\setminus \{y,y',z,z',w,w'\}$. 
See the fourth figure of Figure~\ref{fig:matching3}.
Since $aa'$ and $yy'$ are part of a strong clique of $G$, there must be an edge between $\{a,a'\}$ and $\{y,y'\}$. 
If $ya,yb \in E(G)$, then $yaa'x'b'by$ is a $C_6$. 
Otherwise, we may assume without loss of generality that $ya \notin E(G)$, so one of the following is a $C_6$: $xx'a'yw'wx$, $xx'a'y'z'zx$, $xx'a'ay'yx$.
For the third case, since $G[W'(x')]$ contains a $P_3$, $G$ contains a $C_8$ by Claim~\ref{claim:indep2}. 
Therefore, $G$ contains a $C_{2m}$.
\end{proof}

For simplicity, let $W(x)=\{u_1,\ldots,u_n\}$. 
Note that $n\ge m \ge 3$.  
By Claim~\ref{claim:indep3}, we may assume that $W'(x)$ is an independent set of $G$.
Hence, $G[W(x)\cup W'(x)]$ is bipartite.
By Lemma~\ref{lem:matching:path},
we may assume, by relabeling indices if necessary, 
that $u_1u_1'u_2u_2'\ldots u_{m-1}u_{m-1}'u_m$ is a path in $G$.
Then, by adding two edges $u_mx$ and $xu_1$ to the path, we obtain a $C_{2m}$.
This proves Lemma~\ref{lem:matching}. 
\end{proof}

Now, we prove Theorem~\ref{thm:general}.

\begin{proof}[Proof of Theorem~\ref{thm:general}]
Let $G$ be a $C_{2k}$-free graph with $\Delta(G)\geq 1$, and $H$ be the subgraph of $G$ induced by a maximum strong clique of $G$.
Let $M$ be a maximum matching of $H$ and let $|M|=m$. 
If $m\geq 2k$, then $G$ contains a $C_{2k}$ by Lemma~\ref{lem:matching}. 
Hence, $m\le 2k-1$.  
If $\Delta(H)=1$, then $|E(H)|= |M|\le (2k-1)\Delta(G)$, so in what follows suppose that $\Delta(H)\ge 2$.

Define the following sets:
$Z=V(G)\setminus V(M)$, 
$X=\{ v\in V(M)\mid N_H(v)\cap Z \neq \emptyset\}$, and  $Y=\{ v\in V(M) \mid|N_H(v)\cap Z|\ge 2 \}$.
Also, let $D=\max\{ \deg_H(v)\mid v\in V(M)\setminus Y\}$, which implies $D\le \min\{ 2m,\Delta(H)\}$ since every vertex in $V(M)\setminus Y$ has at most one neighbor of $H$ in $Z$. 
Now,
\[\sum_{v\in V(H)}\deg_H(v)
\le |Y|\Delta(H)+(2m-|Y|)D + |E_H(V(M),Z)|.\]

By the maximality of $M$, if $xx'\in M$, then $x$ and $x'$ cannot have distinct neighbors of $H$ in $Z$. 
Thus every edge in $M$ has at most one endpoint in $Y$, so $|Y|\le m$.
Therefore, $|E_H(V(M),Z)|\le (\Delta(H)-1)|Y|+(2m-2|Y|)$.
Thus 
$2|E(H)|\le  2|Y|\Delta(H) +(2mD-|Y|D+2m-3|Y|)$, and so 
\begin{eqnarray}\label{eq:last}
&& |E(H)|\le  |Y|(\Delta(H)-D/2-1) +m(D+1). \end{eqnarray}
Note that $\Delta(H)-D/2-1\ge 0$ since $\Delta(H)\ge \frac{\Delta(H)+2}{2}\ge 1+\frac{D}{2}$.
From \eqref{eq:last} and the fact that $|Y|\le m$, $D\le 2m$, and $m\leq 2k-1$, we conclude the following:
\begin{eqnarray*}
|E(H)|  
 &\leq&  m(\Delta(H)-D/2-1)+m(D+1) \\ 
 &=&  m\Delta(H)+mD/2\\
 &\leq & (2k-1)\Delta(H) + (2k-1)^2\\
 &\leq & (2k-1)\Delta(G) + (2k-1)^2.
\end{eqnarray*}
\end{proof}
\begin{remark}
We can actually show $\SC(G)\le \frac{(2k-1)(4\Delta(G)+1)}{3}$;  this is a better bound than the upper bound obtained from Theorem~\ref{thm:general} if $\Delta(G)\le 12k-8$.
We provide an outline of the proof, and use the notation in the proof of Theorem~\ref{thm:general}. 

We may assume that $\Delta(H)\ge 2$.
By the maximality of $M$, there is no edge of $H$ between two vertices in $\{ y'\mid y\in Y\}$.
Thus, $V(M)\setminus\{ y'\mid y\in Y\}$ is a vertex cover of $H$.
If $|Y|\ge \frac{2m}{3}$, then  $|E(H)|\le (2m-|Y|)\Delta(H)\le \frac{4(2k-1)}{3}\Delta(H)$.
If $|Y|<\frac{2m}{3}$, then, from \eqref{eq:last}, $|E(H)| \le \frac{2m}{3}(\Delta(H)-1-\frac{D}{2})+m(D+1)$,
since $\Delta(H)-1-\frac{D}{2}\ge 0$.
Hence, $|E(H)|\le \frac{2m}{3}\Delta(H) +\frac{2mD}{3}+\frac{m}{3} \le \frac{4(2k-1)\Delta(H)+(2k-1)}{3}$.
\end{remark}

\section*{Acknowledgements}
\small{Ilkyoo Choi was supported by the Basic Science Research Program through the National Research Foundation of Korea funded by the Ministry of Education (No. NRF-2018R1D1A1B07043049), and also by the Hankuk University of Foreign Studies Research Fund.
Ringi Kim was supported by the National Research Foundation of Korea grant funded by the Korea government (No. NRF-2018R1C1B6003786), and also by Basic Science Research Program through the National Research Foundation of Korea funded by the Ministry of Education (No. NRF-2019R1A6A1A10073887).
Boram Park was supported by the National Research Foundation of Korea grant funded by the Korea government (No. NRF-2018R1C1B6003577).}

\bibliography{ref}{}
\bibliographystyle{plain}
 \end{document}